\newcommand{\fint}[1]{\left\lfloor#1\right\rfloor}
\newcommand{\cint}[1]{\left\lceil#1\right\rceil}
\newcommand{\abs}[1]{\left|#1\right|}
\DeclareMathOperator{\id}{id}
\numberwithin{equation}{section}
\newcounter{count}
\newcommand{\num}{\stepcounter{count}\the\value{count}}
\renewcommand{\epsilon}{\varepsilon}
\newtheorem{theorem}{Theorem}[section]
\newtheorem{lemma}[theorem]{Lemma}
\newtheorem{corollary}[theorem]{Corollary}
\newtheorem{proposition}[theorem]{Proposition}
\theoremstyle{definition}
\newtheorem{question}[theorem]{Question}
\newtheorem{remark}[theorem]{Remark}
\begin{document}

\title[APs in the Graph of Slightly Curved Sequences]{Arithmetic Progressions in the Graphs of\\ Slightly Curved Sequences}

\author[K. Saito]{ Kota Saito }
\address{Kota Saito\\
Graduate School of Mathematics\\ Nagoya University\\ Furo-cho\\ Chikusa-ku\\ Nagoya\\ 464-8602\\ Japan }
\curraddr{}
\email{m17013b@math.nagoya-u.ac.jp}

\author[Y. Yoshida] {Yuuya Yoshida}
\address{Yuuya Yoshida\\
Graduate School of Mathematics\\ Nagoya University\\ Furo-cho\\ Chikusa-ku\\ Nagoya\\ 464-8602\\ Japan }
\curraddr{}
\email{m17043e@math.nagoya-u.ac.jp}

\thanks{KS is financially supported by Yoshida Scholarship Foundation.}
\thanks{YY is grateful to Seunghoan Song for providing many helpful comments for this paper.}

\subjclass[2010]{Primary: 11B25, Secondary: 11B30. }

\keywords{arithmetic progression, Szemer\'edi's theorem, Piatetski-Shapiro sequence, van der Waerden number, Gowers' upper bound.}
\maketitle

\begin{abstract}
A strictly increasing sequence of positive integers is called a \textit{slightly curved sequence with small error} 
if the sequence can be well-approximated by a function whose second derivative goes to zero faster than or equal to $1/x^\alpha$ for some $\alpha>0$. 
In this paper, we prove that arbitrarily long arithmetic progressions are contained in the graph of a slightly curved sequence with small error. 
Furthermore, we extend Szemer\'edi's theorem to a theorem about
slightly curved sequences. 
As a corollary, 
it follows that the graph of the sequence $\{\lfloor{n^a}\rfloor\}_{n\in A}$ contains arbitrarily long arithmetic progressions 
for every $1\le a<2$ and every $A\subset\mathbb{N}$ with positive upper 
density. 
Using this corollary, we show that 
the set $\set{\lfloor{\lfloor{p^{1/b}}\rfloor^a}\rfloor | \text{$p$ prime}}$ contains arbitrarily long arithmetic progressions for every $1\le a<2$ and $b>1$. 
We also prove that, for every $a\ge2$, the graph of $\{\lfloor{n^a}\rfloor\}_{n=1}^\infty$ does not contain any arithmetic progressions of length $3$.
\end{abstract}

\section{Introduction}\label{section1}

This paper considers problems involving arithmetic progressions. Let $k\geq 3$ and $d\ge1$ be integers. 
A sequence $\{a(j)\}_{j=0}^{k-1}\subset\mathbb{N}^d$ is called an \textit{arithmetic progression (AP) of length $k$} 
if there exists $D\in \mathbb{N}^d$ such that 
\[
	a(j)= a(0)+jD
\]
for all $j=0,1,\ldots, k-1$. We discuss only the cases $d=1$ and $d=2$. Here, note the following two points. 
First, all components of the common difference $D$ are positive, since $\mathbb{N}=\{1,2,\ldots\}$ does not contain zero. 
Second, this paper addresses strictly increasing sequences alone and then regards sequences of positive integers as subsets of $\mathbb{N}$. 
That is why we denote a sequence by, say, $\{a(n)\}_{n=1}^\infty$ instead of $(a(n))_{n=1}^\infty$. 

APs have elicited much interest from researchers studying arithmetic combinatorics, geometric measure theory, and fractal geometry. 
Most studies consider the density of sets to ensure the existence of long APs. For example, we recall Szemer\'edi's celebrated result: 

\begin{proposition}[Szemer\'edi \cite{szemeredi}]\label{szemeredi}
	For every $k\geq 3$ and $0<\delta\leq 1$ there exists an integer $N(k,\delta)>0$ such that if $N\geq N(k,\delta)$, 
	then every set $A\subset\{1,2,\ldots, N\}$ with $|A|\geq \delta N$ contains an AP of length $k$.
\end{proposition}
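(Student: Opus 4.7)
The plan is to follow the density-increment strategy introduced by Roth for $k=3$ and extended by Gowers to general $k$. The overall structure is a dichotomy: either a dense subset $A\subset\{1,\ldots,N\}$ already contains roughly the expected number $\delta^k N^2$ of $k$-term APs, or $A$ has abnormally large density on some long sub-progression, which can then be rescaled so that the hypothesis applies again with a strictly larger density parameter $\delta'>\delta$. Since the density cannot exceed $1$, this iteration must terminate after boundedly many steps, and at termination the ``uniform'' case must occur, producing the required AP.

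To make this precise, I would first transfer the problem to $\mathbb{Z}/N'\mathbb{Z}$ with $N'$ a prime close to $N$, and introduce Gowers' uniformity norms $\|\cdot\|_{U^{k-1}}$ on functions $\mathbb{Z}/N'\mathbb{Z}\to\mathbb{C}$, defined by $2^{k-1}$-fold averages over combinatorial cubes. The \emph{generalized von Neumann theorem} then shows that the counting form $\sum_{x,d} f_0(x)f_1(x+d)\cdots f_{k-1}(x+(k-1)d)$ is bounded by $\min_j\|f_j\|_{U^{k-1}}$ up to trivial factors. Applying this with $f_j=1_A-\delta$ yields the uniform case: if $\|1_A-\delta\|_{U^{k-1}}$ is small, then $A$ contains at least $c(\delta)N^2$ nontrivial $k$-APs.

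The heart of the argument is the inverse step: if instead $\|1_A-\delta\|_{U^{k-1}}\geq \eta(\delta)$, then I must produce a sub-AP $P\subset\{1,\ldots,N\}$ of length at least $N^{c_k}$ on which $A$ has density at least $\delta+c(k,\delta)$. Iterating this dichotomy can increase the density by a fixed positive amount at most $O(1/c(k,\delta))$ times before it saturates, so for $N\geq N(k,\delta)$ chosen sufficiently large relative to these iteration bounds, the uniform case must eventually apply on a sub-progression still long enough to contain a $k$-AP.

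The main obstacle is the inverse theorem for $U^{k-1}$ that feeds the density-increment step. For $k=3$ it reduces to classical Fourier analysis: a large $U^2$-norm of the balanced function $1_A-\delta$ forces a large nontrivial Fourier coefficient, from which a sub-AP of increased density is extracted by a standard partitioning argument. For $k\geq 4$, however, the functions with large $U^{k-1}$-norm need not correlate with linear characters; one must instead correlate with polynomial phases (or, in the sharpest form, with nilsequences) and then pass from such correlation to a density increment on a genuine arithmetic progression. This nonlinear inverse theory is the truly deep input, and it is what Gowers' original proof and the later Green--Tao--Ziegler program were devoted to establishing in quantitative form.
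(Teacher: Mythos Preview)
The paper does not prove this proposition at all: it is stated as a known background result with a citation to Szemer\'edi's original 1975 paper, and is invoked as a black box (notably in the proof of Theorem~\ref{SZ}). So there is no ``paper's own proof'' to compare against.

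Your outline is a fair high-level sketch of Gowers' density-increment strategy, which is one of the standard modern routes to Szemer\'edi's theorem (and is indeed the source of the bound \eqref{Gowers} that the paper uses elsewhere). That said, what you have written is a plan rather than a proof: you correctly identify the inverse theorem for the $U^{k-1}$-norm as the crux, but you do not carry it out, and the passage from ``large $U^{k-1}$-norm'' to ``density increment on a genuine sub-progression of length $N^{c_k}$'' is precisely the part that requires hundreds of pages in Gowers' paper. If the intent is merely to indicate why the result is true and where the difficulty lies, this is adequate; if the intent is to supply a self-contained proof, the gap is the entire inverse theory, which you acknowledge but do not address. Note also that your sketch follows Gowers rather than Szemer\'edi's original combinatorial regularity-based argument, so even as an outline it is not the proof the citation points to.
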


Here $|X|$ denotes the cardinality of a finite set $X$. 
Furthermore, Steinhaus showed that every set with positive Lebesgue measure contains arbitrarily long APs 
from Lebesgue's density theorem; for instance, see \cite[Theorem 3]{Jasinski}. 
These sufficient conditions are traditional and important, but it is difficult to weaken them. 
Hence we have found a new class of sets containing arbitrarily long APs, which is a main contribution of this paper. 
We call this new class \textit{slightly curved sequences}, which are defined below.

Let $g: \mathbb{N}\to\mathbb{R}$ be an eventually positive function and let $\mathbb{R}^+=(0,\infty)$. 
A strictly increasing sequence $\{a(n)\}_{n=1}^\infty\subset \mathbb{N}$ is called a \textit{slightly curved sequence with error $O(g(n))$} 
if there exists a twice differentiable function $f: \mathbb{R}^+ \to \mathbb{R}$ such that 
\begin{gather}
	f''(x)=O(1/x^\alpha), \label{SCS1}\\
	a(n)=f(n)+O(g(n))\nonumber
\end{gather}
for some $\alpha>0$. A slightly curved sequence with error $o(g(n))$ is also defined in the same way. 
Here the notation $f(x)=O(g(x))$ denotes that 
there exist $C>0$ and $x_0>0$ such that $|f(x)|\leq C g(x)$ for all $x\geq x_0$, 
where $g(x)$ is an eventually positive function. 
In this paper, the constant $C$ often depends on the length $k$ or the exponent $\alpha$ in \eqref{SCS1}. 
When emphasizing the dependence on $k$ or $\alpha$, 
we write $f(x)=O_{k,\alpha}(g(x))$. 
Also, the notation $f(x)=o(g(x))$ denotes that $f(x)/g(x)$ goes to zero as $x\to \infty$. 
Furthermore, to address two-dimensional APs, 
we define the \textit{graph} of a sequence: for every $A\subset\mathbb{N}$, 
the \textit{graph} of a sequence $\{a(n)\}_{n\in A}\subset \mathbb{N}$ is defined as the set $\{(n,a(n))\in\mathbb{N}^2 \mid n\in A\}$. 

Surprisingly, we do not impose any density conditions on sets of the above new class. 
Instead of density conditions, we restrict the behavior of leading terms of sequences. 
The first goal of this paper is to prove the following theorem: 

\begin{theorem}\label{main2}
	Fix an integer $k\geq 3$. The graph of every slightly curved sequence with error $o((\log\log n)^{1/c_k})$ contains an AP of length $k$, where $c_k=2^{2^{k+9}}$.
\end{theorem}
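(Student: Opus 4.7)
The plan is pigeonhole plus Szemer\'edi's theorem in the quantitative form of Gowers. Fix $k \geq 3$, let $\alpha > 0$ be the exponent from $f''(x) = O(1/x^\alpha)$, and take $N$ very large. On the window $W := [N, 2N] \cap \mathbb{N}$, I define the integer-valued residual
\[
T(n) := a(n) - \lfloor f(n) \rfloor = \{f(n)\} + e(n), \qquad e(n) := a(n) - f(n).
\]
Since $T(n) \in \mathbb{Z}$ while $|e(n)| = o((\log\log N)^{1/c_k})$ uniformly on $W$, it follows that $|T(n)| \leq C(\log\log N)^{1/c_k}$ for all $n\in W$ once $N$ is large; hence $T$ takes only $O((\log\log N)^{1/c_k})$ integer values on $W$.

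Next, the pigeonhole principle produces $v \in \mathbb{Z}$ whose preimage $A := T^{-1}(v) \cap W$ has density $\delta \gtrsim (\log\log N)^{-1/c_k}$. Gowers' upper bound $N(k,\delta) \leq 2^{2^{\delta^{-c_k}}}$ becomes, for our $\delta$, $N(k,\delta) \leq 2^{2^{\log\log N}} = N \approx |W|$; the particular value $c_k = 2^{2^{k+9}}$ in the hypothesis is dictated precisely so that this matching is tight. Szemer\'edi's theorem (Proposition \ref{szemeredi}) then yields an AP $n_0, n_0+D, \ldots, n_0+(k-1)D$ contained in $A$, on which $a(n_0+jD) = \lfloor f(n_0+jD) \rfloor + v$. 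The only thing left is to argue that $j \mapsto \lfloor f(n_0+jD) \rfloor$ is itself arithmetic in $j$, which would then give the desired AP in the graph.

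This lifting step is the main obstacle. The slight-curvature bound controls the nonlinearity $f(n_0+jD) - f(n_0) - jDf'(n_0)$ by $O(k^2 D^2/N^\alpha)$, but the floor function can still introduce sporadic $\pm 1$ jumps that spoil arithmeticity along a generic AP in $W$. I plan to absorb these jumps by refining the above pigeonhole: augment the classification of $n$ by an additional constant-size quantization of the fractional part $\{f(n)\}$, costing only a constant factor in $\delta$ and hence not disturbing the Gowers matching, and then apply a Ramsey-type argument (van der Waerden on the two-coloring that records whether the fractional-part orbit $j \mapsto \{f(n_0+jD)\}$ wraps past $1$ at each step) inside the resulting Szemer\'edi-AP to extract a sub-AP of length $k$ on which the floor is exactly linear. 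The delicate part is that the quantitative budget fixed by $c_k$ leaves essentially no slack, so the entire refinement must be executed with all auxiliary losses remaining $N$-independent constants.
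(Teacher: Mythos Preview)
Your lifting step has a genuine gap, and it is not the floor-function issue you flag. The bound $O(k^2 D^2/N^\alpha)$ you quote for the nonlinearity of $f$ along the Szemer\'edi AP is correct, but on the window $[N,2N]$ it is useless: Szemer\'edi's theorem gives no control on the common difference $D$ beyond $(k-1)D\le N$, so $D$ may be of order $N$, and then $D^2/N^\alpha$ is of order $N^{2-\alpha}$. For any $\alpha<2$ --- e.g.\ $f(x)=x^{3/2}$ with $\alpha=1/2$, the most basic nontrivial case --- this blows up, so $\lfloor f(n_0+jD)\rfloor$ is not even approximately linear in $j$. The second differences are enormous, not ``sporadic $\pm 1$ jumps'', and neither a constant-size refinement of the pigeonhole on $\{f(n)\}$ nor a van der Waerden pass on fractional-part wraps can absorb an error that grows like a power of $N$.

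The paper fixes this by decoupling the window length from its position: it works on a block of length $W(r,k)$ placed at position $\lfloor W(r,k)^{3/\alpha}\rfloor$, so that any AP inside has $d\le W(r,k)$ and $e\ge W(r,k)^{3/\alpha}$, giving $d^2/e^\alpha\le 1/W(r,k)=o(1)$. It also pigeonholes directly on the real-valued error $R(n)=a(n)-f(n)$ into $r$ subintervals of width $o(1)$ and applies van der Waerden with $r$ colours rather than Szemer\'edi on a dense fibre; on a colour class one gets $|\Delta^2[R\circ b](j)|=o(1)$ immediately, which together with $|\Delta^2[f\circ b](j)|=o(1)$ forces $N_P(a)<1$, hence $N_P(a)=0$. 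Gowers' bound enters as the upper bound $W(r,k)\le 2^{2^{r^{c_k}}}$, matched against the error $o((\log\log n)^{1/c_k})$ via property~(U2). Your argument becomes essentially the paper's if you move the window to $[N^{3/\alpha},N^{3/\alpha}+N]$ and pigeonhole on $R$ rather than on $T$; as written, the choice $[N,2N]$ is fatal.
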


For small $k$, we can improve the above error term: 

\begin{theorem}\label{main2'}
	The graph of every slightly curved sequence with error $o((\log n)(\log\log n)^{-4})$ contains APs of length $3$. 
	The graph of every slightly curved sequence with error $o((\log n)^c)$ contains APs of length $4$, 
	where $c>0$ is an absolute constant.
\end{theorem}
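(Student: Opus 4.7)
The plan is to follow the same argument as in the proof of Theorem~\ref{main2}, but to feed in the state-of-the-art quantitative Szemer\'edi bounds known for the small cases $k=3$ and $k=4$ rather than Gowers' general-$k$ bound. Gowers' bound gives $N(k,\delta)\le\exp\exp(\delta^{-c_k})$ with $c_k=2^{2^{k+9}}$, which is precisely what produces the iterated-logarithm threshold $(\log\log n)^{1/c_k}$ in Theorem~\ref{main2}. For $k=3$ and $k=4$, much sharper estimates are available in the literature.

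First, I would revisit the proof of Theorem~\ref{main2} and verify that the Szemer\'edi function $N(k,\delta)$ enters only as a \emph{black box}, i.e., that a smaller bound on $N(k,\delta)$ translates directly into a weaker hypothesis on the allowable error $g(n)$. Concretely, the proof of Theorem~\ref{main2} should yield an implication roughly of the following shape: if $N\ge N(k,1/(Cg(N)))$ for a suitable constant $C>0$ and all large $N$, then the graph of any slightly curved sequence with error $O(g(n))$ contains an AP of length $k$ inside the dyadic window $[N,2N]$. In this schematic, the slight-curvature hypothesis $f''(x)=O(1/x^\alpha)$ is what allows an AP in $n$-space, on the density-$\delta\gtrsim 1/g(N)$ pigeonholed subset, to lift to an AP in the graph; hence the allowable error threshold is governed entirely by how large $\delta$ must be for Szemer\'edi's theorem to apply.

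Given this reduction, for $k=3$ I would substitute Bloom's bound $r_3(N)\ll N(\log\log N)^4/\log N$ (which improves earlier results of Sanders), equivalently $N(3,\delta)\le\exp(C'\delta^{-1}(\log\delta^{-1})^4)$. Solving $N\ge N(3,1/(Cg(N)))$ for $g$ yields precisely the threshold $g(N)=o((\log n)(\log\log n)^{-4})$ claimed in Theorem~\ref{main2'}. For $k=4$, I would invoke the Green--Tao bound $r_4(N)\ll N(\log N)^{-c}$ for an absolute constant $c>0$, which gives $N(4,\delta)\le\exp(\delta^{-1/c})$; the same back-substitution produces the threshold $g(N)=o((\log n)^{c'})$ for some absolute $c'>0$.

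The main obstacle, I expect, is the bookkeeping step of verifying the black-box claim: that Gowers' bound enters the proof of Theorem~\ref{main2} through a single application to a subset of dyadic density $\gtrsim 1/g(N)$, rather than being invoked iteratively. If Gowers' bound were instead interleaved with a further scale decomposition, one would need to propagate the improvement through each step rather than performing a one-shot substitution. Given the clean shape of the threshold $(\log\log n)^{1/c_k}$ in Theorem~\ref{main2}, however, a single-shot reduction is the natural reading, and the proof of Theorem~\ref{main2'} then reduces to a direct quantitative substitution.
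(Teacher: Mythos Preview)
Your proposal is correct and follows essentially the same approach as the paper: both substitute the Bloom bound on $r_3(N)$ and the Green--Tao bound on $r_4(N)$ into the same reduction (Proposition~\ref{main1}) that underlies Theorem~\ref{main2}, and your guess that the quantitative input enters as a single black-box application is accurate. The only cosmetic difference is that the paper phrases the black box in terms of van der Waerden numbers (condition~(U3)) rather than the Szemer\'edi function $N(k,\delta)$, converting the $r_k(N)$ bounds into bounds on $W(r,k)$ via a one-line pigeonhole argument.
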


Theorems~\ref{main2} and \ref{main2'} are proved in Section~\ref{prediscussion}. 
Since the second derivative \eqref{SCS1} of $f$ must vanish as $x\to\infty$, 
we cannot apply Theorem~\ref{main2} or \ref{main2'} to the case when $f$ is a quadratic function. 
To compensate this weak point, 
Appendix~\ref{appA} proves another result without the assumption $f''(x)=O(1/x^\alpha)$, 
which can be applied to the sequence $\{ \lfloor{c_2 n^2 + c_1 n + c_0}\rfloor \}_{n=1}^\infty$ with a sufficiently small $c_2>0$. 

To state the second goal of this paper, let us introduce basic notions. 
We say that a set $A\subset \mathbb{N}$ has \textit{positive upper density} if the condition 
\[
\limsup_{N\to \infty} \frac{|A\cap[1,N]|}{N} > 0
\]
holds. We also say that a set $A\subset \mathbb{N}$ has \textit{positive upper Banach density} if the condition 
\[
\limsup_{N\to \infty} \frac{\max_{n\ge0} |A\cap[n+1,n+N]|}{N} > 0
\]
holds. If a set $A\subset \mathbb{N}$ has positive upper density, then \textit{a fortiori} $A$ has also positive upper Banach density. 
However, the converse does not hold in general.

The second goal of this paper is to show the following result: 

\begin{theorem}[An extension of Szemer\'edi's theorem]\label{SZ}
	If $\{a(n)\}_{n=1}^\infty$ is a slightly curved sequence with error $O(1)$ and a set $A\subset\mathbb{N}$ has positive upper Banach density, 
	then the graph of $\{a(n)\}_{n\in A}$ contains arbitrarily long APs.
\end{theorem}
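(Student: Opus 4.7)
Write $a(n)=f(n)+e(n)$ with $|e(n)|\le C$ and let $\delta>0$ denote the upper Banach density of $A$; fix $k\ge 3$. The plan is to localize on a long window where $f$ is essentially affine, encode the deviation of $a$ from its affine approximation by a bounded integer ``defect'', and then apply Szemer\'edi's theorem (Proposition~\ref{szemeredi}) to produce the desired $k$-AP in the graph.

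By the definition of upper Banach density, for any $N$ one can pick $M$ arbitrarily large with $|A\cap[M+1,M+N]|\ge(\delta/2)N$. Choosing $N=N(M)\to\infty$ subject to $N=o(M^{\alpha/2})$, Taylor's theorem gives $f(x)=L(x)+o(1)$ uniformly on the window, where $L(x)=f(M)+f'(M)(x-M)$. Thus the integer defect $p(n):=a(n)-\lfloor L(n)\rfloor$ lies in a fixed bounded range $\{-C',\ldots,C'\}$ on the window. A $k$-AP in the graph at positions $n_0+jD$ is equivalent to $\{a(n_0+jD)\}_{j=0}^{k-1}$ being an AP in $\mathbb Z$; expanding via $L$ and setting $\alpha:=f'(M)$, this in turn is equivalent to (i) $p(n_0+jD)$ being constant in $j$, and (ii) the floor correction $\sigma(j):=\lfloor\{L(n_0)\}+j\{\alpha D\}\rfloor$ vanishing for $j=0,\ldots,k-1$, the latter holding as soon as $(k-1)\|\alpha D\|<1-\{L(n_0)\}$.

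Both conditions are arranged by refining $A$ while preserving positive upper Banach density. Pigeonhole over the $O(1)$ values of $p$ gives a level set $A_h\subset A\cap[M+1,M+N]$ with $p\equiv h$. A Bohr-set restriction enforces the Diophantine side: by Dirichlet's theorem the Bohr set $B:=\{D\in\mathbb Z:\|\alpha D\|<1/(2(k-1))\}$ has positive density, and a Bohr-refined Szemer\'edi-type theorem (Bergelson--Leibman / Furstenberg--Katznelson) produces in $A_h$ a $k$-AP with common difference $D\in B$. Finally, an equidistribution argument lets the starting point be chosen in the arc $\{n:\{L(n)\}<1/2\}$. The resulting $k$-AP $n_0,n_0+D,\ldots,n_0+(k-1)D$ in $A$ satisfies $p\equiv h$ and $\sigma\equiv 0$, so $a(n_0+jD)=(\lfloor L(n_0)\rfloor+h)+j\lfloor\alpha D\rfloor$ is a genuine AP in $j$, yielding the $k$-AP in the graph.

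The principal obstacle is matching the combinatorial flexibility of Szemer\'edi's theorem with the Diophantine constraint on $\|\alpha D\|$: the standard Szemer\'edi offers no control on the common difference, and ensuring $\|\alpha D\|$ small requires either the Bohr-refined form cited above, or a more elementary iteration of Dirichlet's theorem coupled with repeated pass-to-residue-class arguments. A further bookkeeping subtlety is that $\alpha=f'(M)$ depends on $M$, so the Diophantine denominator is $M$-dependent; all four of $M$, $N$, the denominator, and the AP length must be interleaved so that the linearization, the density, and the Diophantine control all hold simultaneously.
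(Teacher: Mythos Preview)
Your plan manufactures a difficulty that is not actually there. By passing to the \emph{integer} defect $p(n)=a(n)-\lfloor L(n)\rfloor$ you force yourself to make $\{\lfloor L(n_0+jD)\rfloor\}_{j}$ an honest AP, which is exactly the Diophantine constraint $\|\alpha D\|$ small that you then try to discharge with a ``Bohr-refined Szemer\'edi'' theorem. That last step is neither Bergelson--Leibman (polynomial Szemer\'edi) nor Furstenberg--Katznelson (multidimensional/IP Szemer\'edi) as stated, and your own final paragraph concedes that interleaving the $M$-dependent frequency $\alpha=f'(M)$, the window length $N$, and the Diophantine denominator is unresolved. As written, this is a gap: the crucial step is delegated to an imprecisely cited and much deeper black box, while the bookkeeping needed to make it finitary on a moving window is not carried out.

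The fix---and the paper's argument---is to avoid the floor entirely. Work with the \emph{real-valued} error $R(n)=a(n)-f(n)$ (or $a(n)-L(n)$; it does not matter), bucket its bounded range $[-M,M)$ into $r$ intervals of width $2M/r$, and use the second-difference seminorm $N_P(a)\le N_P(f)+N_P(R)$. The paper first applies Szemer\'edi to extract from $A$ a long AP of length $W(r,k)$ lying in a window $(h(x),h(x+1)]$ with $h'(x+1)^2/h(x)^\alpha\to 0$; then it $r$-colours that long AP by the bucket of $R$ and invokes van der Waerden to obtain a monochromatic sub-AP $\tilde P=\{dj+e\}_{j=0}^{k-1}$. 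On $\tilde P$ one has $N_{\tilde P}(R)\le 2(k-2)\cdot 2M/r<1$ for $r=4kM$, while $N_{\tilde P}(f)=\sum_j d^2|f''(\cdot)|=O(h'(x+1)^2/h(x)^\alpha)=o(1)$ purely from $d\le h'(x+1)$ and $e\ge h(x)$---no control on $d$ modulo anything. Hence $N_{\tilde P}(a)<1$, and being a nonnegative integer it is $0$, so $a(\tilde P)$ is an AP. In short: colouring the real-valued error into fine buckets, rather than the integer defect $p$, makes the Diophantine issue disappear and reduces the whole proof to Szemer\'edi plus van der Waerden.
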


As a corollary, we also obtain the following result: 

\begin{corollary}\label{SZcor}
	If a set $A\subset \mathbb{N}$ has positive upper Banach density, then the graph of $\{\lfloor{n^a}\rfloor\}_{n\in A} $ contains arbitrarily long APs for every $1\leq a< 2$.
\end{corollary}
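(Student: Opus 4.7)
The plan is to apply Theorem~\ref{SZ} directly, with $f(x) = x^a$ as the underlying smooth approximant to $\lfloor n^a \rfloor$.

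First I would verify that $\{\lfloor n^a \rfloor\}_{n=1}^\infty$ is a slightly curved sequence with error $O(1)$ in the sense of the paper's definition. Taking $f(x) = x^a$, we have $f''(x) = a(a-1)x^{a-2}$, so $f''(x) = O(1/x^{2-a})$; since $1 \le a < 2$, the exponent $\alpha = 2 - a$ is strictly positive, which gives \eqref{SCS1}. The approximation is immediate: $\lfloor n^a \rfloor = n^a + O(1) = f(n) + O(1)$, since $|x - \lfloor x \rfloor| < 1$ for all $x$.

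Second, I would check the required monotonicity. For $a = 1$ the sequence is just $n$, trivially strictly increasing. For $1 < a < 2$, the mean value theorem gives $n^a - (n-1)^a \ge a(n-1)^{a-1} \to \infty$, so $\lfloor n^a \rfloor - \lfloor (n-1)^a \rfloor \ge 1$ for all $n \ge n_0$ with some $n_0$. Removing the finitely many indices less than $n_0$ from $A$ does not affect positive upper Banach density, so after this harmless restriction the hypotheses of Theorem~\ref{SZ} are met.

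Third, Theorem~\ref{SZ} applied to the restricted slightly curved sequence and the set $A \cap [n_0, \infty)$ yields arbitrarily long APs in the graph $\{(n, \lfloor n^a \rfloor) : n \in A \cap [n_0, \infty)\}$, and these sit inside the graph over $A$ as well. There is no genuine obstacle here; the corollary is essentially a verification of definitions, the only mild point being the monotonicity check for $a$ close to $1$, which is handled by passing to a tail.
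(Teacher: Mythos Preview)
Your proposal is correct and follows exactly the paper's approach: set $f(x)=x^a$, observe $f''(x)=O(1/x^{2-a})$ with $2-a>0$, note $\lfloor n^a\rfloor=f(n)+O(1)$, and invoke Theorem~\ref{SZ}. Your extra verification of strict monotonicity is a point the paper leaves implicit; in fact no tail restriction is needed, since for $a\ge1$ and $n\ge2$ the mean value theorem already gives $n^a-(n-1)^a\ge a\ge1$, so $\{\lfloor n^a\rfloor\}_{n=1}^\infty$ is strictly increasing from the start.
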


Here, for every $x\in \mathbb{R}$ the notation $\lfloor x \rfloor$ denotes the greatest integer less than or equal to $x$ 
and the notation $\lceil x \rceil$ denotes the least integer greater than or equal to $x$. 
Corollary~\ref{SZcor} with $a=1$ is just Szemer\'edi's theorem (Proposition~\ref{szemeredi}). 
By using Corollary~\ref{SZcor}, Section~\ref{Warming} shows that 
the set $\set{\lfloor{\lfloor{p^{1/b}}\rfloor^a}\rfloor | \text{$p$ prime}}$ contains arbitrarily long APs for every $1\le a<2$ and $b>1$.

\begin{proof}[Proof of Corollary~$\ref{SZcor}$ assuming Theorem~$\ref{SZ}$]
	Fix $1\leq a<2$ and let $f(x)=x^a$. Then $\lfloor n^a \rfloor = f(n) + O(1)$. 
	Since $f''(x) =O(1/x^{2-a})$, Theorem~\ref{SZ} implies Corollary~\ref{SZcor}.
\end{proof}

In particular, Corollary~\ref{SZcor} with $A=\mathbb{N}$ implies the following result immediately: 

\begin{corollary}\label{main3}
	The graph of $\{\lfloor n^a \rfloor\}_{n=1}^{\infty}$ contains arbitrarily long APs for every $1\leq a<2$.
\end{corollary}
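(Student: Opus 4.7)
The plan is simply to specialize Corollary~\ref{SZcor} to the set $A=\mathbb{N}$. Since the graph of $\{\lfloor n^a\rfloor\}_{n\in\mathbb{N}}$ is by definition $\{(n,\lfloor n^a\rfloor)\mid n\in\mathbb{N}\}$, which coincides with the graph of $\{\lfloor n^a\rfloor\}_{n=1}^\infty$, it suffices to verify that the hypothesis of Corollary~\ref{SZcor} is met for $A=\mathbb{N}$.

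For $A=\mathbb{N}$ we have $|A\cap[1,N]|=N$ for every $N\in\mathbb{N}$, so
\[
	\limsup_{N\to\infty}\frac{|A\cap[1,N]|}{N}=1>0,
\]
that is, $\mathbb{N}$ has positive upper density. As noted in the paragraph preceding Theorem~\ref{SZ}, positive upper density implies positive upper Banach density, so $\mathbb{N}$ qualifies as a set to which Corollary~\ref{SZcor} applies. Invoking that corollary with the fixed value of $a\in[1,2)$ yields the conclusion immediately. There is no real obstacle here: the entire content of Corollary~\ref{main3} is packaged inside Corollary~\ref{SZcor}, whose own proof in turn reduces, via the computation $f''(x)=O(1/x^{2-a})$ with $f(x)=x^a$, to the extension of Szemer\'edi's theorem given in Theorem~\ref{SZ}.
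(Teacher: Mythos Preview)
Your proposal is correct and follows precisely the paper's own approach: the paper states that Corollary~\ref{main3} is obtained ``immediately'' from Corollary~\ref{SZcor} by taking $A=\mathbb{N}$, and your argument simply spells out why $\mathbb{N}$ satisfies the Banach-density hypothesis.
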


When $a>1$ and $a\not\in\mathbb{N}$, the sequences $\{\lfloor n^a \rfloor\}_{n=1}^{\infty}$ are called Piatetski-Shapiro sequences. 
Hence Corollary~\ref{main3} implies that the Piatetski-Shapiro sequence with every exponent $1<a<2$ contains arbitrarily long APs.
Piatetski-Shapiro sequences are named after Piatetski-Shapiro \cite{Piatetski-Shapiro}, who proved that 
for every $1<a<12/11$ the sequence $\{\lfloor n^a \rfloor\}_{n=1}^{\infty}$ contains infinitely many primes. 
It is known that the range $1<a<12/11$ can be improved to $1<a<243/205$ \cite{Rivat-Wu}. 
Many preceding studies considered prime numbers contained in Piatetski-Shapiro sequences. 
For instance, Mirek \cite{Mirek} proved that for every $1<a<72/71$ 
the set of all primes of the form $\lfloor n^a \rfloor$ contains an AP of length $3$. 
Recently, Li and Pan \cite{Li-Pan} claimed that 
for every integer $k\ge3$ there exists $a_k>1$ such that for every $1<a<a_k$ 
the set of all primes of the form $\lfloor n^a \rfloor$ contains an AP of length $k$. 
Although this paper considers Piatetski-Shapiro sequences, 
prime numbers are not our main topic.

\begin{remark}
When $a>1$, the sum of the reciprocals of $\{\lfloor n^a \rfloor\}_{n=1}^{\infty}$ converges: 
\[
	\sum_{n=2}^\infty \frac{1}{\lfloor n^a \rfloor}
	\leq \sum_{n=2}^\infty \frac{1}{n^a-1}
	\leq \sum_{n=2}^\infty \frac{2}{n^a}< \infty.
\]
Thus we cannot apply the Erd\H{o}s-Tur\'an conjecture to this sequence. 
Here the Erd\H{o}s-Tur\'an conjecture asserts that 
every subset of positive integers whose sum of reciprocals diverges must contain arbitrarily long APs \cite{ErdosTuran}. 
Since the sum of the reciprocals of all primes diverges, 
the Erd\H{o}s-Tur\'an conjecture implies that the set of all primes contains arbitrarily long APs. 
Green and Tao \cite{GreenTao1} proved that the set of all primes contains arbitrarily long APs, 
but the Erd\H{o}s-Tur\'an conjecture is still open even in the case when the length of an AP is three. 
The Erd\H{o}s-Tur\'an conjecture is a strong statement that implies the result by Green and Tao; 
nevertheless, it does not imply Corollary~\ref{main3} directly.
\end{remark}

Corollary~\ref{main3} ensures that the graph of $\{\lfloor{n^a}\rfloor\}_{n=1}^\infty$ with $1\le a<2$ contains arbitrarily long APs, 
but the graph of $\{\lfloor{n^a}\rfloor\}_{n=1}^\infty$ with $a\ge2$ does not contain any APs of length $3$, 
which is proved in Section~\ref{FW}. 
Hence the graph of $\{n^2\}_{n=1}^{\infty}$ does not contain any APs of length $3$, 
while the sequence $\{n^2\}_{n=1}^{\infty}$ (not the graph) contains APs of length $3$, e.g., $\{1,25,49\}$. 
However, there are not any APs of length $4$ in $\{n^2\}_{n=1}^{\infty}$, 
which was proposed by Fermat in 1640 and first shown by Euler in 1780, according to Dickson's book \cite[pp.~440 and 635]{Dickson}. 
Euler also showed that there are not any APs of length $3$ in $\{n^3\}_{n=1}^{\infty}$, according to \cite[pp.~572--573]{Dickson}. 
Moreover, D\'enes \cite{Denes} showed that for every integer $3\le a\le30$ there are not any APs of length $3$ in $\{n^a\}_{n=1}^{\infty}$. 
Finally, for every integer $a\ge3$, Darmon and Merel \cite{DarmonMerel} showed that $\{n^a\}_{n=1}^{\infty}$ does not contain any APs of length $3$. 
We do not know whether $\{\lfloor n^a \rfloor\}_{n=1}^{\infty}$ would contain long APs if $a>2$ is not an integer. 
One might guess that such a sequence would not contain APs, but the following sequences are APs of length $4$: 
\begin{gather*}
	\{ \lfloor{2^{2.2}}\rfloor, \lfloor{11^{2.2}}\rfloor, \lfloor{15^{2.2}}\rfloor, \lfloor{18^{2.2}}\rfloor \},\\
	\{ \lfloor{14^{2.655015}}\rfloor, \lfloor{39^{2.655015}}\rfloor, \lfloor{50^{2.655015}}\rfloor, \lfloor{58^{2.655015}}\rfloor \},\\
	\{ \lfloor{27^{2.720398}}\rfloor, \lfloor{89^{2.720398}}\rfloor, \lfloor{114^{2.720398}}\rfloor, \lfloor{132^{2.720398}}\rfloor \}.
\end{gather*}

In the course of this study, we proved a result in Appendix~\ref{appB}, 
which is that the graph of $\{\lfloor{f(n)}\rfloor\}_{n=1}^\infty$ contains an AP of length 4 
if a function $f:\mathbb{N}\to\mathbb{R}^+$ satisfies $\Delta f>0$, $\Delta^2 f\geq 0$, and $\liminf_{n\to\infty} f(n)/n^2 < 1/18$, 
where $\Delta$ denotes the difference operator which is defined in Section~\ref{prediscussion}. 
This result can be applied to the sequence $\{ \lfloor{c_2 n^2 + c_1 n + c_0}\rfloor \}_{n=1}^\infty$ with $0<c_2<1/18$, 
but Theorem~\ref{main2} or \ref{main2'} cannot be applied.

\section{Warming up}\label{Warming}
As an application of Corollary~\ref{SZcor}, this section shows that 
the set $\set{\lfloor{\lfloor{p^{1/b}}\rfloor^a}\rfloor | \text{$p$ prime}}$ contains arbitrarily long APs for every $1\le a<2$ and $b>1$. 
Once we put $A=\set{\lfloor{p^{1/b}}\rfloor | \text{$p$ prime}}$, 
the above set can be expressed as 
$\set{\lfloor{\lfloor{p^{1/b}}\rfloor^a}\rfloor | \text{$p$ prime}} = \{\lfloor{n^a}\rfloor\}_{n\in A}$. 
Thus, if $A$ has positive upper density, the above statement follows from Corollary~\ref{SZcor}. 
Let us verify that $A$ has positive upper density. 
Let $\pi(x)$ be the number of primes less than or equal to $x>0$ 
and $\tilde{\pi}(S)$ be the number of primes contained in a set $S\subset\mathbb{R}^+$. 
Then every $x>0$ satisfies 
\begin{align*}
	&\quad \abs{A\cap[1,x]} = \abs{\set{\lfloor{p^{1/b}}\rfloor \le x | \text{$p$ prime}}}\\
	&\ge \frac{\abs{\set{p\le x^b | \text{$p$ prime}}}}{\max_{n\le x} \tilde{\pi}[n^b, (n+1)^b)}
	\ge \frac{\pi(x^b)}{\max_{n\le x} \tilde{\pi}[n^b, (n+1)^b]}.
\end{align*}
Since there exists $y_0>0$ such that every $x>0$ and $y\ge y_0$ satisfy $\tilde{\pi}[x, x+y]\le3y/\log y$ \cite[Corollary~3.4]{MV}, 
the denominator $\tilde{\pi}[n^b, (n+1)^b]$ of the above right-hand side is upper bounded as 
\begin{align*}
	&\quad \tilde{\pi}[n^b, (n+1)^b]
	\overset{\text{(i)}}{\le} \max_{n\le x} \tilde{\pi}[n^b, n^b + b(n+1)^{b-1}]\\
	&\le \max_{n\le x} \tilde{\pi}[n^b, n^b + b(x+1)^{b-1}]
	\overset{\text{(ii)}}{\le} \frac{3b(x+1)^{b-1}}{\log(b(x+1)^{b-1})}
\end{align*}
for every $x\ge(y_0/b)^{1/(b-1)}-1$, 
where (i) and (ii) follow from the mean value theorem and \cite[Corollary~3.4]{MV}, respectively. 
We now use the prime number theorem \cite[Chapter~6]{MV}: $\pi(x)\sim x/\log x$, 
where the notation $f(x)\sim g(x)$ denotes that $f(x)/g(x)$ goes to one as $x\to\infty$. 
Therefore, the prime number theorem yields 
\begin{align*}
	&\quad \frac{\abs{A\cap[1,x]}}{x} \ge \pi(x^b) \Big/ x\frac{3b(x+1)^{b-1}}{\log(b(x+1)^{b-1})}\\
	&\sim \frac{x^b}{b\log x} \Big/ x\frac{3b(x+1)^{b-1}}{(b-1)\log(x+1)}
	\sim \frac{b-1}{3b^2} > 0 \quad (x\to\infty),
\end{align*}
whence $A$ has positive upper density.

As a special case of the above result, it follows that 
the set $\set{\lfloor{\lfloor{p^{1/b}}\rfloor^b}\rfloor | \text{$p$ prime}}$ contains arbitrarily long APs for every $1<b<2$. 
However, the case $b=1$, i.e., the Green-Tao theorem \cite{GreenTao1} would not be proved by the above method. 
This is because the set $\set{\lfloor{p^{1/b}}\rfloor | \text{$p$ prime},\ |\lfloor{p^{1/b}}\rfloor^b - p| \le C}$ 
has probably upper density zero for every $1<b<2$ and $C>0$.

\section{More general statement than Theorems~\ref{main2} and \ref{main2'}}\label{prediscussion}
In order to prove Theorems~\ref{main2} and \ref{main2'}, 
this section introduces another proposition which is proved in Section~\ref{proofTh1}. 
This proposition uses a function satisfying three properties below. 
Before stating them, let us define the \textit{van der Waerden numbers} \cite{Waerden}. 
For every $k\geq 3$ and $r\geq 2$, 
the \textit{van der Waerden number} $W(r,k)$ is the smallest number $N$ such that 
if $\{1,2,\ldots,N\}$ are partitioned into $r$ different sets then there exists at least one set which contains an AP of length $k$. 
Fix an integer $k\geq 3$. 
This section considers a function $U_k$ satisfying the following properties:

\begin{itemize}
	\item[(U1)]
	there exist $x_k\geq 1$ and $y_k\geq 1$ such that a function 
	$U_k : [x_k, \infty) \to [y_k,\infty)$ is increasing and bijective;
	\item[(U2)]
	$U_k(r^\alpha) = O_{k,\alpha}(U_k(r))$ for all $\alpha>0$;
	\item[(U3)]
	the inverse function $U_k^{-1}$ of $U_k$ satisfies $W(r,k)=O_k(U_k^{-1}(r))$.
\end{itemize}

Using such a function $U_k$, we obtain the following proposition: 

\begin{proposition}\label{main1}
	Fix an integer $k\geq 3$. Assume that a function $U_k$ satisfies (U1), (U2), and (U3). 
	If $\{a(n)\}_{n=1}^\infty$ is a slightly curved sequence with error $o(U_k(n))$, 
	then there exists an arithmetic progression $P$ of length $k$ such that $a(P)$ is also an arithmetic progression of length $k$.
\end{proposition}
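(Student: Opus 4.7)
The plan is to apply van der Waerden's theorem to a coloring of a short sub-interval of $\mathbb{N}$ and then use the slight curvature of $f$ to promote monochromaticity into an actual AP in the graph.

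I would fix small positive constants $\gamma$ and $\eta$, to be chosen at the end, and for large $N$ restrict attention to the interval $I := [N, N+L]$ with $L := \lfloor \gamma N^{\alpha/2} \rfloor$. The size $L$ is forced by a Taylor bound: for any AP $\{n_0 + jD\}_{j=0}^{k-1} \subset I$ the step satisfies $D \leq L/(k-1) \leq \gamma N^{\alpha/2}$, and combined with $f''(x) = O(1/x^{\alpha})$ this bounds the ``$f$-part'' of the discrepancy
\[
L_j(n_0, D) := a(n_0 + jD) - a(n_0) - j\bigl(a(n_0 + D) - a(n_0)\bigr)
\]
by $C_f k^2 \gamma^2$, independently of $N$, for every $j = 2, \ldots, k-1$.

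Next I would color each $n \in I$ by the integer $c(n) := \lfloor (a(n) - f(n))/\eta \rfloor$. Since $a(n) - f(n) = o(U_k(n))$, the number of colors used is $R = o(U_k(N))$. Property (U2) then allows me to compare $U_k(L)$ with $U_k(N)$: because $U_k$ is quasi-homogeneous under power substitutions, $U_k(L)$ differs from $U_k(N)$ by only a multiplicative constant depending on $k$ and $\alpha$, which upgrades to $R \leq U_k(L/C_k)$ for $N$ sufficiently large, where $C_k$ is the constant from (U3). Then (U3) yields $W(R, k) \leq C_k U_k^{-1}(R) \leq L$, so van der Waerden's theorem applied to $c$ on $I$ produces a monochromatic AP $\{n_0 + jD\}_{j=0}^{k-1} \subset I$ of length $k$.

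Finally, I would split $L_j(n_0, D) = L_j^{(f)} + L_j^{(e)}$ along $a = f + (a - f)$. The Taylor estimate gives $|L_j^{(f)}| \leq C_f k^2 \gamma^2$, and the monochromaticity of $c$ places $a(n_0 + jD) - f(n_0 + jD)$ in a common bin of width $\eta$, yielding $|L_j^{(e)}| \leq k\eta$. Choosing $\gamma, \eta$ so small that $C_f k^2 \gamma^2 + k\eta < 1$ forces the integer $L_j(n_0, D)$ to vanish for every $j = 2, \ldots, k-1$, which is precisely the statement that $\{a(n_0 + jD)\}_{j=0}^{k-1}$ is an AP; hence the graph contains the required AP of length $k$.

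The main technical point, and the step I expect to be the chief obstacle, is the scale-matching in the middle paragraph. The Taylor bound constrains $L$ to be only a fractional power of $N$, yet the van der Waerden threshold coming from (U3) must still fit inside $I$. Property (U2) is tailor-made for this: it absorbs the exponent $\alpha/2 < 1$ into a constant loss in $U_k$, so the color budget $o(U_k(N))$ survives the restriction to $L \approx N^{\alpha/2}$ and stays below $U_k(L/C_k)$ for $N$ large enough.
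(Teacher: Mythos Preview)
Your argument is correct and is essentially the paper's proof: both color a well-chosen interval by binning the error $a-f$, apply van der Waerden to extract a monochromatic $k$-term AP, and then combine the bound $f''=O(x^{-\alpha})$ with the small bin width to push the integer second differences below $1$. The only distinction is bookkeeping---you fix $\gamma,\eta$ and send the starting point $N\to\infty$, whereas the paper parameterizes by the number $r$ of colors, takes the interval $[\lfloor W(r,k)^{3/\alpha}\rfloor,\lfloor W(r,k)^{3/\alpha}\rfloor+W(r,k))$, and lets $r\to\infty$---but the scale-matching via (U2) and the final integrality step are the same.
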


The proof of Theorem~\ref{main2} uses the function $U_k(r)=(\log\log r)^{1/c_k}$ with $c_k=2^{2^{k+9}}$. 
Similarly, the proof of Theorem~\ref{main2'} uses the functions 
$U_3(r)=C_3^{-1}(\log r)(\log\log r)^{-4}$ and $U_4(r)=C_4^{-1}(\log r)^c$ for some $C_3,C_4>0$ and some absolute constant $c>0$. 
Thus Theorems~\ref{main2} and \ref{main2'} are special cases of Proposition~\ref{main1}. 
To prove Theorem~\ref{main2} by assuming Proposition~\ref{main1}, 
we need to use upper bounds of the van der Waerden numbers. 
As such upper bounds, Gowers' upper bounds \cite[Theorem~18.6]{Gowers} are known: 
\begin{equation}
	W(r,k)\leq 2^{2^{r^{c_k}}},\quad c_k=2^{2^{k+9}}. \label{Gowers}
\end{equation}
For general $r$ and $k$, Gowers' upper bounds are best at present.

\begin{proof}[Proof of Theorem~$\ref{main2}$ assuming Proposition~$\ref{main1}$]
Fix an integer $k\geq 3$. Let $U_k(r) = (\log\log r)^{1/c_k}$ with $c_k=2^{2^{k+9}}$. 
It is enough to show that the function $U_k$ satisfies (U1), (U2), and (U3). 
The properties (U1) and (U2) follow from the definition of $U_k$. 
Also, Gowers' upper bound \eqref{Gowers} implies that the function $U_k$ satisfies $W(r,k)\leq \exp\bigl(\exp(r^{c_k})\bigr)=U_k^{-1}(r)$, which is just (U3). 
\end{proof}

Next, to prove Theorem~\ref{main2'} by assuming Proposition~\ref{main1}, 
we introduce the notation $r_k(N)$ \cite{ErdosTuran}. 
The notation $r_k(N)$ is the maximum cardinality of all subsets of $\{1,2,\ldots,N\}$ that contain no APs of length $k$. 
For $k=3,4$, the following upper bounds of $r_k(N)$ are known \cite[Theorem~1.1]{Bloom}, \cite[Theorem~1.1]{GreenTao2}: 
\begin{equation}
	r_3(N) = O\Bigl( \frac{N}{(\log N)(\log\log N)^{-4}} \Bigr),\quad
	r_4(N) = O\Bigl( \frac{N}{(\log N)^c} \Bigr) \label{rUB}
\end{equation}
for some absolute constant $c>0$. 
As explained below, 
the upper bounds \eqref{rUB} correspond to upper bounds of the van der Waerden numbers $W(r,k)$ with $k=3,4$, 
which are better than Gowers' upper bounds \eqref{Gowers}.

\begin{proof}[Proof of Theorem~$\ref{main2'}$ assuming Proposition~$\ref{main1}$]
First, we show that if a function $U_k$ satisfies (U1) and 
\begin{equation}
	r_k(N) < N/U_k(N)\quad(N\ge x_k) \label{eqY1}
\end{equation}
then the function $U_k$ satisfies (U3). 
Due to (U1), there exists $r_k>0$ such that every $r\ge r_k$ satisfies $U_k^{-1}(r)\ge1$. 
Let $r\ge r_k$ be an integer and let $N=\lceil U_k^{-1}(r) \rceil$. 
The definition of $N$ yields $N = \lceil{U_k^{-1}(r)}\rceil \ge U_k^{-1}(r)$ 
and then (U1) does $U_k(N)\ge r$. 
Take an arbitrary partition of $\{1,2\ldots, N\}$ into $r$ small sets. 
Then the pigeonhole principle implies that there exists a set $A$ consisting of at least $\lceil N/r\rceil$ elements. 
Thus $A$ contains an AP of length $k$ because the inequality 
\[
	|A| \geq \lceil N/r\rceil \geq N/r \overset{\text{(i)}}{\geq} N/U_k(N) \overset{\eqref{eqY1}}{>} r_k(N)
\]
holds, where (i) follows from $U_k(N)\ge r$. 
Since the partition of $\{1,2\ldots, N\}$ is arbitrary, 
the van der Waerden number $W(r,k)$ is upper bounded by $N$. 
Thus it follows that 
\[
	W(r,k) \leq N \leq U_k^{-1}(r)+1 \leq 2U_k^{-1}(r)
\]
from the definition of $N$ and (U1). 
Therefore, the function $U_k$ satisfies (U3).
\par
Next, using the result in the previous paragraph, we show Theorem~\ref{main2'}. 
The upper bounds \eqref{rUB} imply that there exist constants $C_3,C_4>0$ such that 
\[
r_3(N) < C_3\cdot \frac{N}{(\log N)(\log\log N)^{-4}},\quad
r_4(N) < C_4\cdot \frac{N}{(\log N)^c}
\]
for every sufficiently large $N$. 
If taking the functions $U_3(N)=C_3^{-1}(\log N)(\log\log N)^{-4}$ and $U_4(N)=C_4^{-1}(\log N)^c$, 
the inequality \eqref{eqY1} holds for $k=3$ and $k=4$, respectively. 
Since the functions $U_3$ and $U_4$ satisfy (U1) and (U2), 
the result in the previous paragraph implies that the functions $U_3$ and $U_4$ also satisfy (U3). 
Therefore, Proposition~\ref{main1} implies Theorem~\ref{main2'}.
\end{proof}

\section{Proof of Proposition~\ref{main1}}\label{proofTh1}
This section proves Proposition~\ref{main1}. 
Before proving Proposition~\ref{main1}, 
let us define a semi-norm on the vector space $\mathcal{F} = \{f \mid f:\mathbb{R}^+\to \mathbb{R}\}$. 
Let $k\geq 3$ be an integer and $P=\{b(j)\}_{j=0}^{k-1} \subset \mathbb{R}^+$ be a strictly increasing sequence. 
We define
\[
	N_P(f)=\sum_{j=0}^{k-3} \abs{\Delta^2[f\circ b](j)},
\]
for every $f\in\mathcal{F}$, where $\Delta$ denotes the difference operator, that is,
\[
	\Delta f(x)=f(x+1)-f(x),
\] 
and $\Delta^2 := \Delta\circ\Delta$. We can find that $N_P$ satisfies the following properties:
\begin{itemize}
	\item[(N1)]
	for every strictly increasing function $f\in\mathcal{F}$, 
	\begin{equation*}
		\text{$N_P(f)=0$ if and only if $f(P)$ is an AP of length $k$};
	\end{equation*}
	\item[(N2)]
	$N_P(f)\geq 0$ for all $f\in\mathcal{F}$;
	\item[(N3)]
	$N_P(f +g ) \leq N_P(f) + N_P(g)$ for all $f,g\in\mathcal{F}$.
\end{itemize}
All the properties above can be easily checked from the definition.

\begin{proof}[Proof of Proposition~$\ref{main1}$]
Fix an integer $k\geq 3$ and take an arbitrary slightly curved sequence $\{a(n)\}_{n=1}^\infty$ with error $o(U_k(n))$: 
there exists a twice differentiable function $f: \mathbb{R}^+ \to \mathbb{R}$ satisfying \eqref{SCS1} and $a(n)=f(n)+o(U_k(n))$. 
Let $r$ be a sufficiently large positive integer. We define $W(r) = W(r,k)$, $R(n) = a(n)-f(n)$, and 
\[
A(r) = \Set{m\in\mathbb{N} | \lfloor W(r)^{3/\alpha} \rfloor \leq m < \lfloor W(r)^{3/\alpha}\rfloor+W(r)}.
\]
Then every $m\in A(r)$ satisfies 
\begin{equation*}
	R(m) = o\bigl(U_k(2W(r)^{\max\{3/\alpha,1\}}) \bigr) = o\bigl(U_k(U_k^{-1}(r))\bigr)=o(r)
\end{equation*}
thanks to (U1), (U2), and (U3). Thus there exists a positive function $\delta(r)$ such that 
\begin{equation*}
	R(m)\in [-\delta(r), \delta(r))
\end{equation*}
for all $m\in A(r)$, and
\begin{equation}\label{f2.5}
\delta(r)=o(r)
\end{equation}
as $r\to \infty$. We also define
\begin{gather*}
	I_j = \Bigl[-\delta(r) + \frac{2\delta(r)}{r}j,\ -\delta(r) + \frac{2\delta(r)}{r}(j+1)\Bigr),\quad
	A_j(r) = \{m\in A(r) \mid R(m)\in I_j \}
\end{gather*}
for all $j=0,1,\ldots, r-1$. 
Note that the union of all the small sets $A_j(r)$ equals $A(r)$. 
From the definition $W(r)=W(r,k)$, it follows that there exists an integer $q\in \{0,1,\ldots, r-1\}$ such that 
$A_q(r)$ contains an arithmetic progression $P=\{b(j)\}_{j=0}^{k-1}$ of length $k$. 
Here $b(j)$ is expressed as $b(j) = dj + e$ with two integers $d,e>0$. 
Since every $j=0,1,\ldots ,k-1$ satisfies $b(j)\in A(r)$, we obtain 
\begin{equation}\label{ff3}
	d\leq W(r),\quad \lfloor W^{3/\alpha}(r) \rfloor\leq e.
\end{equation}
Then the triangle inequality (N3) implies $N_P(a) \leq N_P(f)+N_P(R)$.
\par
Finally, we show that $N_P(a)=o(1)$ as $r\to\infty$, which follows from the two relations $N_P(f)=o(1)$ and $N_P(R)=o(1)$. 
Since we have \eqref{f2.5} and $b(j)\in A_q(r)$ for all $j=0,1,\ldots, k-1$, the inequality 
\[
N_P(R) \le \sum_{j=0}^{k-3} (|\Delta[R\circ b](j+1)| + |\Delta[R\circ b](j)|)
\leq \frac{4(k-2)\delta(r)}{r} = o(1)
\]
holds as $r\to \infty$. 
Hence the remaining is to show that $N_P(f)=o(1)$ as $r\to \infty$. 
The mean value theorem implies that for every $j=0,1,\ldots, k-3$ there exist $\theta_j, \eta_j \in (0,1)$ such that
\begin{align*}
	\Delta^2[f\circ b](j)
	&= \Delta[f\circ b](j+1) - \Delta[f\circ b](j)
	= (\Delta[f\circ b])'(j+\theta_j) \\
	&= d\{f'\circ b(j+\theta_j+1) - f'\circ b(j+\theta_j)\}
	= d^2 f''\circ b(j+\theta_j+\eta_j),
\end{align*}
where $b(x) := dx+e$. 
Since \eqref{ff3} and the assumption $f''(x)=O( 1/x^\alpha)$ hold, every $j=0,1,\ldots, k-3$ satisfies 
\[
	\Delta^2[f\circ b](j)=d^2 f''(d(j+\theta_j+\eta_j)+e) = O\Bigl(W(r)^2 \cdot \frac{1}{W(r)^3}\Bigr) =o(1)
\]
as $r\to \infty$, whence $N_P(f)=o(1)$ as $r\to \infty$. 
Therefore, a sufficiently large integer $r>0$ satisfies $N_P(a)<1$. 
Since $N_P(a)$ is a non-negative integer from the definition, the equation $N_P(a)=0$ holds, 
which implies that $a(P)$ is also an AP of length $k$ due to (N1).
\end{proof}

\section{Proof of Theorem~\ref{SZ}}
This section considers, for every $\alpha>0$, whether the set $A\cap(h_\alpha(x),h_\alpha(x+1)]$ contains an AP or not, 
which is useful to prove Theorem~\ref{SZ}. 
Here $h_\alpha$ is a function satisfying the following properties: 
\begin{itemize}
	\item[(H1)]
	$h_\alpha$ is a differentiable function defined on the interval $(x_0,\infty)$ for some $x_0>0$; 
	\item[(H2)]
	there exists a positive number $x_1\ge x_0$ such that $h_\alpha$ and $h'_\alpha$ strictly increase on the interval $(x_1,\infty)$; 
	\item[(H3)]
	$\lim_{x\to\infty} h'_\alpha(x+1)^2/h_\alpha(x)^\alpha = 0$;
	\item[(H4)]
	$\lim_{x\to\infty} h'_\alpha(x)=\infty$.
\end{itemize}
For example, the function $h_\alpha(x)=x\log x$ satisfies the above three properties. 
As another example, we can take the function $h_\alpha(x)=x^{1+\alpha/2}$. 
The latter example is used in Appendix~\ref{appA}. 
In order to prove Theorem~\ref{SZ}, we introduce the following condition for a set $A\subset\mathbb{N}$: 
\begin{itemize}
	\item[(C)]
	for every integer $k\ge3$ and every $\alpha>0$, there exist a function $h_\alpha$ satisfying (H1)--(H3) and 
	a strictly increasing sequence $\{x_n\}_{n=1}^\infty\subset\mathbb{R}^+$ which diverges such that 
	for every integer $n>0$ the set $A\cap(h_\alpha(x_n),h_\alpha(x_n+1)]$ contains an AP of length $k$.
\end{itemize}

Theorem~\ref{SZ} follows from the following proposition and preliminary lemma.

\begin{proposition}\label{main5}
	Let $A$ be a subset of positive integers with (C), and $\{a(n)\}_{n=1}^\infty$ be a slightly curved sequence with error $O(1)$. 
	Then the graph of $\{a(n)\}_{n\in A}$ contains arbitrarily long APs.
\end{proposition}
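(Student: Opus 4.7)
The plan is to mimic the proof of Proposition~\ref{main1}, with the seminorm $N_P$ from Section~\ref{proofTh1} as the main bookkeeping device, the goal being to produce an AP $P\subset A$ of length $k$ for which $N_P(a)=0$; by (N1) this forces $a(P)$ to be an AP of length $k$, so $\{(n,a(n))\mid n\in P\}$ is an AP of length $k$ in the graph of $\{a(n)\}_{n\in A}$. The essential difference from Proposition~\ref{main1} is that the AP in the domain is no longer produced by a single application of van der Waerden on $\{1,\dots,W(r,k)\}$: condition (C) now supplies a long AP already sitting inside $A$, and van der Waerden is used only internally to refine this AP according to a partition of the bounded error term $R(n):=a(n)-f(n)$.

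Here is the skeleton. Fix $k\ge3$ and write $a=f+R$ with $f''(x)=O(1/x^\alpha)$ and $|R(n)|\le M$ for some constants $\alpha>0$ and $M>0$. For a parameter $r$ to be chosen, apply condition (C) with the integer $k$ there replaced by $K:=W(r,k)$ and with the above $\alpha$. This yields a function $h_\alpha$ satisfying (H1)--(H3) and a divergent sequence $\{x_n\}$ such that for every $n$ the window $A\cap(h_\alpha(x_n),h_\alpha(x_n+1)]$ contains an AP $P'=\{e_n+i d_n'\}_{i=0}^{K-1}$ of length $K$. Partition $[-M,M]$ into $r$ equal sub-intervals of length $2M/r$, and color each index $i\in\{0,\dots,K-1\}$ by the sub-interval containing $R(e_n+i d_n')$. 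By the definition of $W(r,k)$, some color class contains an AP of length $k$ in the index set, which gives an AP $P=\{E+jD\}_{j=0}^{k-1}\subset A$ (with $D$ a positive integer multiple of $d_n'$) whose image under $R$ lies in a single bin of width $2M/r$.

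The triangle inequality (N3) reduces everything to estimating $N_P(f)$ and $N_P(R)$. For $N_P(R)$, the same argument used in the proof of Proposition~\ref{main1} yields
\[
N_P(R)\le 4(k-2)\cdot\frac{2M}{r},
\]
so fixing $r$ large enough once and for all makes $N_P(R)<1/2$. For $N_P(f)$, the two-step application of the mean value theorem from Section~\ref{proofTh1} with $\tilde b(x)=Dx+E$ gives $\Delta^2[f\circ\tilde b](j)=D^2 f''(\xi_j)$ with $\xi_j>h_\alpha(x_n)$. Since all $k$ elements of $P$ lie in the window $(h_\alpha(x_n),h_\alpha(x_n+1)]$, the mean value theorem together with (H2) gives $(k-1)D\le h_\alpha(x_n+1)-h_\alpha(x_n)\le h'_\alpha(x_n+1)$, and therefore
\[
N_P(f)=O_k\!\left(\frac{h'_\alpha(x_n+1)^2}{h_\alpha(x_n)^\alpha}\right)\longrightarrow 0 \quad(n\to\infty)
\]
by (H3). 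Consequently, for $n$ sufficiently large (with $r$ already frozen), $N_P(a)<1$; since $N_P(a)$ is a non-negative integer, $N_P(a)=0$, and (N1) produces the required AP.

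The main obstacle I anticipate is making the two-stage limit precise: $r$ must be frozen first so that the bin width $2M/r$ is fixed, and only afterward can $n$ be driven to infinity to exploit (H3). A secondary technical point is the bound $(k-1)D\le h'_\alpha(x_n+1)$ on the common difference of the refined AP $P$ (as opposed to that of $P'$); but this is immediate from the fact that $P$ consists of $k$ points in arithmetic progression contained in a window of length at most $h'_\alpha(x_n+1)$, independently of how large $K$ is.
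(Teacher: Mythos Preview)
Your proof is correct and follows essentially the same route as the paper's: fix $r$ (the paper takes $r=4kM$ explicitly, you leave it as ``large enough''), invoke (C) with length $W(r,k)$ to obtain a long AP inside $A\cap(h_\alpha(x_n),h_\alpha(x_n+1)]$, refine it via the $r$-coloring of $R$ to an AP $P$ of length $k$ in a single bin, and then bound $N_P(f)$ and $N_P(R)$ exactly as in the paper (the paper's bound on the common difference is $d\le h'_\alpha(x_n+1)$, your sharper $(k-1)D\le h'_\alpha(x_n+1)$ being immaterial). The only slip is the constant in your $N_P(R)$ estimate: the bin width is $2M/r$, so the bound is $2(k-2)\cdot\tfrac{2M}{r}$ rather than $4(k-2)\cdot\tfrac{2M}{r}$, but this does not affect the argument.
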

\begin{proof}
	Let $k\ge3$ be an integer. 
	There exists a twice differentiable function $f: \mathbb{R}^+\to\mathbb{R}$ satisfying \eqref{SCS1} and $a(n)=f(n)+O(1)$. 
	Defining $R(n) = a(n) - f(n)$, we can take an integer $M$ satisfying $|R(n)|<M$. 
	Put $r = 4kM$. 
	Then (C) implies that there exist a function $h_\alpha$ satisfying (H1)--(H3) and 
	a strictly increasing sequence $\{x_n\}_{n=1}^\infty\subset\mathbb{R}^+$ which diverges such that 
	for every integer $n>0$ the set $A\cap(h_\alpha(x_n),h_\alpha(x_n+1)]$ contains an AP of length $W(r,k)$. 
	Let $P(n)$ be such an AP of length $W(r,k)$. 
	Then we define the following sets: 
	\begin{gather*}
		I_j=\Bigl[ -M + \frac{2M}{r}j, -M + \frac{2M}{r}(j+1) \Bigr)\quad(0\le j\le r-1),\\
		A_j(n)=\set{m\in P(n) | R(m)\in I_j}\quad(0\le j\le r-1).
	\end{gather*}
	Since the union of all the small sets $A_j(n)$ equals $P(n)$, 
	for some $0\le q\le r-1$ the set $A_q(n)$ contains an arithmetic progression $\tilde{P}=\{b(j)\}_{j=0}^{k-1}$. 
	Here $b(j)$ is expressed as $b(j) = dj + e$ with two integers $d,e>0$. 
	For every sufficiently large $n$ the inequality $d \le h_\alpha(x_n+1) - h_\alpha(x_n) \le h'_\alpha(x_n+1)$ holds due to (H2), and 
	the inequality $e\ge h_\alpha(x_n)$ also holds. 
	Thus every sufficiently large $n$ satisfies 
	\begin{align}
	\begin{split}
		N_{\tilde{P}}(f) &= \sum_{j=0}^{k-3} \abs{\Delta^2[f\circ b](j)}
		= \sum_{j=0}^{k-3} d^2\abs{f''(d(j + \theta_j + \eta_j) + e)}\\
		&= O\Bigl( \frac{h'_\alpha(x_n+1)^2}{h_\alpha(x_n)^\alpha} \Bigr) = o(1),
	\end{split}\label{eqY3}
	\end{align}
	where the last equality follows from (H3) 
	and the values $\theta_j=\theta_j(d,e)$ and $\eta_j=\eta_j(d,e)$ are real numbers satisfying $\theta_j,\eta_j\in(0,1)$. 
	Moreover, the relation $\tilde{P}\subset A_q(n)$ implies 
	\begin{align}
	\begin{split}
		N_{\tilde{P}}(R) &= \sum_{j=0}^{k-3} \abs{\Delta^2[R\circ b](j)}
		\le \sum_{j=0}^{k-3} (\abs{\Delta[R\circ b](j+1)} + \abs{\Delta[R\circ b](j)})\\
		&\le 2(k-2)\frac{2M}{r} = 1-2/k.
	\end{split}\label{eqY4}
	\end{align}
	Hence a sufficiently large integer $n$ satisfies $N_{\tilde{P}}(f) < 2/k$. 
	The inequality $N_{\tilde{P}}(a)<1$ follows from \eqref{eqY3} and \eqref{eqY4}. 
	Since $N_{\tilde{P}}(a)$ is a non-negative integer, 
	the equation $N_{\tilde{P}}(a)=0$ follows: $a(\tilde{P})$ is an AP of length $k$. 
	Therefore, the graph of $\{a(n)\}_{n\in A}$ contains an AP of length $k$.
\end{proof}

\begin{lemma}\label{SZlma1}
	If a set $A\subset\mathbb{N}$ has positive upper Banach density, 
	then there exists a function $h=h_\alpha$ (to be independent of $\alpha$) satisfying (H1)--(H4), and 
	\[
	\limsup_{x\to \infty} \frac{|A\cap(h(x),h(x+1)]|}{|\mathbb{N}\cap(h(x),h(x+1)]|} > 0.
	\]
\end{lemma}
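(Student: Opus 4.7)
The plan is to construct $h$ explicitly, tailored to $A$: I will force the sub-intervals $(h(y_n), h(y_n+1)]$, for a suitable increasing integer sequence $y_n$, to coincide (up to a small error) with Banach density witness intervals of $A$, so that the density ratio is immediately bounded below by $\delta/2$, where $\delta>0$ is the upper Banach density of $A$.

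First I would select a well-chosen sequence of Banach density witnesses. By the $\limsup$ definition, there are infinitely many scales $N$ for which some $M \ge 0$ satisfies $|A \cap [M+1, M+N]|/N \ge \delta/2$. A standard dichotomy then shows that one may take $M \to \infty$: either witnesses with arbitrarily large $M$ already exist at the witness scales, or else all such witnesses lie in a bounded range, which forces $A$ to have positive upper density, and then witnesses with $M$ arbitrarily large are produced by translation. I would thus fix an increasing sequence of witness scales $N_n \to \infty$ and companion positions $M_n$ with $|A \cap (M_n, M_n + N_n]| \ge (\delta/2) N_n$, arranging also $M_{n+1} \ge M_n + N_n + 1$ (so the witness intervals are disjoint) and the quantitative growth $M_n \ge \exp(N_{n+1}^2)$; this last inequality is secured by choosing each $M_n$ only after $N_{n+1}$ has been fixed.

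Next, pick integers $y_n \to \infty$ and define $h$ as a differentiable, strictly convex function on $(x_0, \infty)$ satisfying $h(y_n) = M_n$, $h(y_n + 1) = M_n + N_n$, and $h(y_{n+1}) = M_{n+1}$, with $h'$ equal to approximately $N_n$ on $[y_n, y_n+1]$ and increasing monotonically from about $N_n$ to about $N_{n+1}$ on $[y_n+1, y_{n+1}]$; a small additional perturbation makes $h'$ strictly increasing everywhere. Properties (H1), (H2), and (H4) are then immediate (the last because $h'(y_n) \approx N_n \to \infty$). For (H3), on each $[y_n, y_{n+1}]$ one has $h(x) \ge M_n$ and $h'(x+1) \le N_{n+1} + O(1)$, hence
\[
\frac{h'(x+1)^2}{h(x)^\alpha} \le \frac{(N_{n+1}+O(1))^2}{\exp(\alpha N_{n+1}^2)} \to 0 \quad (n\to\infty)
\]
uniformly on this interval, for every fixed $\alpha > 0$. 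The density condition is built into the construction, since evaluating at $x = y_n$ gives
\[
\frac{|A \cap (h(y_n), h(y_n+1)]|}{|\mathbb{N} \cap (h(y_n), h(y_n+1)]|} = \frac{|A \cap (M_n, M_n + N_n]|}{N_n} \ge \frac{\delta}{2}.
\]

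The hard part will be the extraction step: arguing that one can simultaneously force $M_n \to \infty$ and $M_n \ge \exp(N_{n+1}^2)$ from the bare assumption of positive upper Banach density, which rests on the dichotomy above together with a translation argument reducing the bounded-$M$ case to that of positive upper density. Once the witnesses have been produced, the construction of $h$ and the verifications of (H1)--(H4) and of the density condition are essentially routine, though the perturbation needed to upgrade a piecewise-linear template for $h'$ into a strictly increasing differentiable function must be carried out with a little care.
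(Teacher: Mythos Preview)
Your strategy is genuinely different from the paper's. The paper does not build $h$ around pre-selected witness intervals; instead it defines $h(x)=\int_0^x \tilde F^{-2}(y)\,dy$, where $\tilde F$ is a continuous, rapidly growing majorant of the witness-position function $F(N)=\arg\max_{n}|A\cap[n+1,n+N]|$, and then proves the density statement by contradiction: assuming the $\limsup$ vanishes, a covering of $[F(N)+1,F(N)+N]$ by the intervals $(h(x),h(x+1)]$ forces $\max_n|A\cap[n+1,n+N]|\le 4\epsilon N+o(N)$. Your route trades that covering/contradiction argument for an explicit interpolation, which makes the density inequality trivial and shifts all the work to the witness-extraction and to the smooth construction of $h$; the paper's route avoids any delicate extraction but pays with a longer analytic verification of (H1)--(H4) and a two-case contradiction estimate.

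There is, however, a real gap in your extraction step as written. Your scheme fixes the scales $N_n$ in advance and then asks for a witness $M_n$ at that \emph{particular} scale with $M_n\ge\exp(N_{n+1}^2)$. The dichotomy you invoke only yields witness \emph{pairs} $(N,M)$ with both coordinates large; it does not guarantee that at a given pre-chosen scale $N_n$ one can push $M$ arbitrarily high (indeed, in Case~B of your dichotomy the witness positions at each individual scale may well be bounded). The standard repair is a pigeonhole subdivision: once you have a single pair $(N',M')$ with $N'\ge 2N_n$ and $M'\ge B$, partition $(M',M'+N']$ into $\lfloor N'/N_n\rfloor$ blocks of length $N_n$ and observe that some block inherits density $\ge c\delta$, giving a witness at scale $N_n$ with position $\ge M'\ge B$. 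With this one extra sentence your inductive selection goes through (at the harmless cost of replacing $\delta/2$ by a smaller absolute multiple of $\delta$), and the rest of your plan---piecing together a $C^1$ convex $h$ through the prescribed node values and checking (H3) via $h'(x+1)^2/h(x)^\alpha\le (N_{n+1}+O(1))^2/\exp(\alpha N_{n+1}^2)$---is routine.
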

\begin{proof}
	\setcounter{count}{0}
	\noindent\textbf{Step~\num.}
	For each integer $N>0$, we take a non-negative integer $F(N)$ satisfying 
	\[
	\max_{n\ge0} |A\cap[n+1,n+N]| = |A\cap[F(N)+1,F(N)+N]|
	\]
	and then define the functions $F_{\max}: [0,\infty)\to[0,\infty)$ and $\tilde{F}: [0,\infty)\to[0,\infty)$ as follows: 
	\begin{align*}
		F_{\max}(x) &= (N+1-x)\max_{1\le n\le N} F(n) + (x-N)\max_{1\le n\le N+1} F(n)\\
		&\qquad(N\in\mathbb{N}\cup\{0\},\ N\le x<N+1),\\
		\tilde{F}(x) &= x e^x (F_{\max}(x)+1),
	\end{align*}
	where $\max_{1\le n\le x} F(n)=0$ for all $0\le x<1$. 
	Note that $F_{\max}$ is continuous and satisfies $F_{\max}(N)=\max_{1\le n\le N} F(n)$. 
	Thus $\tilde{F}$ is continuous and satisfies the following properties: 
	\begin{enumerate}
		\item\label{p1}
		$F(N)+1 \le \tilde{F}(N)$ for all $N\in\mathbb{N}$; 
		\item\label{p2}
		$\tilde{F}(x) \ge e^x$ for all $x\ge1$; 
		\item\label{p3}
		$\tilde{F}(2x) \ge 2\tilde{F}(x)$ for all $x\ge0$; 
		\item\label{p4}
		$\tilde{F}: [0,\infty)\to[0,\infty)$ is a homeomorphism and 
		satisfies $\tilde{F}(0)=0$ and $\lim_{x\to\infty} \tilde{F}(x)=\infty$.
	\end{enumerate}
	Using this function, we define the function $h: [0,\infty)\to[0,\infty)$ as 
	\[
	h(x) = \int_0^x \tilde{F}^{-2}(y)\,dy,
	\]
	where $\tilde{F}^{-2} := \tilde{F}^{-1}\circ\tilde{F}^{-1}$, and $\tilde{F}^{-1}$ denotes the inverse function of $\tilde{F}$. 
	Since $\tilde{F}^{-2}$ is continuous, the function $h$ is continuously differentiable. 
	Also, $h$ satisfies (H1) and (H2).
	\par
	\noindent\textbf{Step~\num.}
	To prove that $h$ satisfies (H3), we show the following facts: 
	\begin{enumerate}
		\item\label{f1}
		$h(2x) \le 4h(x)$ for all $x\ge0$; 
		\item\label{f2}
		$h(x) \le x\log x$ for every sufficiently large $x$; 
		\item\label{f3}
		$x h'(x) \le 4h(x)$ for all $x\ge0$; 
		\item\label{f4}
		$h^{-1}(x) \le x$ for all $x\ge\tilde{F}^2(2)$,
	\end{enumerate}
	where $\tilde{F}^2 := \tilde{F}\circ\tilde{F}$. 
	First, thanks to property~\ref{p3} in Step~1, every $y\ge0$ satisfies $2\tilde{F}^{-1}(y) \ge \tilde{F}^{-1}(2y)$. 
	Therefore, every $x\ge0$ satisfies 
	\[
	h(2x) = \int_0^{2x} \tilde{F}^{-2}(y)\,dy = 2\int_0^x \tilde{F}^{-2}(2y)\,dy
	\le 4\int_0^x \tilde{F}^{-2}(y)\,dy = 4h(x),
	\]
	which is just fact~\ref{f1}. 
	Second, thanks to property~\ref{p2} in Step~1, every $y\ge\tilde{F}(1)$ satisfies $\tilde{F}^{-1}(y) \le \log y$. 
	Therefore, every sufficiently large $x$ satisfies 
	\[
	h(x) = \int_{\tilde{F}^2(1)}^x \tilde{F}^{-2}(y)\,dy + O(1)
	\le \int_{\tilde{F}^2(1)}^x \log\log y\,dy + O(1)
	\le x\log\log x \le x\log x,
	\]
	which is just fact~\ref{f2}. 
	Third, fact~\ref{f3} is verified as follows: 
	\[
	x h'(x) \le \int_x^{2x} h'(y)\,dy \le h(2x) \le 4h(x),
	\]
	where the last inequality follows from fact~\ref{f1}. 
	Fourth, every $x\ge\tilde{F}^2(2)$ satisfies 
	\[
	h(x) \ge \int_{\tilde{F}^2(2)}^x \tilde{F}^{-2}(y)\,dy
	\ge \int_{\tilde{F}^2(2)}^x 2\,dy
	= 2(x-\tilde{F}^2(2)) \ge x,
	\]
	which implies fact~\ref{f4}.
	\par
	\noindent\textbf{Step~\num.}
	We show that $h$ satisfies (H3) for every $\alpha>0$. 
	Facts~\ref{f1} and \ref{f3} and (H2) imply 
	\begin{equation}
		\frac{h'(x+1)^2}{h(x)^\alpha}
		\le \frac{h'(2x)^2}{h(x)^\alpha}
		\le \Bigl( \frac{4h(2x)}{2x} \Bigr)^2 \frac{1}{h(x)^\alpha}
		\le \Bigl( \frac{8h(x)}{x} \Bigr)^2 \frac{1}{h(x)^\alpha}
		= \frac{8^2 h(x)^{2-\alpha}}{x^2}. \label{eq1}
	\end{equation}
	If $\alpha\ge2$, the right-hand side in \eqref{eq1} goes to zero as $x\to\infty$. 
	If $0<\alpha<2$, fact~\ref{f2} in Step~2 implies 
	\[
	\frac{h(x)^{2-\alpha}}{x^2}
	\le \frac{(x\log x)^{2-\alpha}}{x^2}
	= \frac{(\log x)^{2-\alpha}}{x^\alpha} \xrightarrow{x\to\infty} 0.
	\]
	Therefore, the function $h$ satisfies (H3).
	\par
	\noindent\textbf{Step~\num.}
	Assuming 
	\[
	\limsup_{x\to \infty} \frac{|A\cap(h(x),h(x+1)]|}{|\mathbb{N}\cap(h(x),h(x+1)]|} = 0,
	\]
	we now deduce a contradiction. This assumption implies that 
	for every $0<\epsilon<1/4$ there exists $x_0>0$ such that every $x\geq x_0$ satisfies 
	\[
	|A\cap(h(x),h(x+1)]| \leq \epsilon |\mathbb{N}\cap(h(x),h(x+1)]|.
	\]
	We consider only the case when $N$ is sufficiently large below. 
	If $F(N)+1 \le h(x_0)$, letting $l = \lceil h^{-1}(F(N)+N) - x_0 \rceil$, we have 
	\begin{align}
		&\quad |A\cap[F(N)+1,F(N)+N]|\nonumber\\
		&\leq |A\cap[F(N)+1,h(x_0)]| + \sum_{j=1}^l |A\cap(h(x_0+j-1),h(x_0+j)]|\nonumber\\
		&\leq h(x_0) - F(N) + \epsilon|\mathbb{N}\cap(h(x_0),h(x_0+l)]|
		\leq \epsilon h(x_0+l) - F(N) + O(1)\nonumber\\
		&\leq \epsilon h\bigl( h^{-1}(F(N)+N)+1 \bigr) - F(N) + O(1)
		\leq \epsilon h\bigl( 2h^{-1}(F(N)+N) \bigr) - F(N) + O(1)\nonumber\\
		&\overset{\text{(i)}}{\leq} 4\epsilon(F(N)+N) - F(N) + O(1)
		= 4\epsilon N - (1-4\epsilon)F(N) + O(1)
		\leq 4\epsilon N + O(1), \label{eq3}
	\end{align}
	where (i) follows from fact~\ref{f1} in Step~2. 
	If $F(N)+1 > h(x_0)$, 
	taking a positive number $x_1$ and a positive integer $l$ with $h(x_1) = F(N)+N$ and $h(x_1-l) < F(N)+1 \le h(x_1-l+1)$, 
	we have 
	\begin{align}
		&\quad |A\cap[F(N)+1,F(N)+N]| \le |A\cap(h(x_1-l),h(x_1)]|\nonumber\\
		&\leq \sum_{j=1}^l |A\cap(h(x_1-j),h(x_1-j+1)]|\nonumber\\
		&\leq \epsilon|\mathbb{N}\cap(h(x_1-l),h(x_1)]|
		\leq \epsilon\{ h(x_1) - h(x_1-l) \}\nonumber\\
		&\leq \epsilon\bigl\{ F(N)+N - h\bigl( h^{-1}(F(N)+1) - 1 \bigr) \bigr\}\nonumber\\
		&\overset{\text{(ii)}}{\leq} \epsilon\bigl\{ F(N)+N - (F(N)+1) + h'\bigl( h^{-1}(F(N)+1) \bigr) \bigr\}\nonumber\\
		&\leq \epsilon\bigl\{ N + h'\bigl( h^{-1}(F(N)+1) \bigr) \bigr\}
		\overset{\text{(iii)}}{=} \epsilon N + o(N). \label{eq4}
	\end{align}
	\par
	We verify the above (ii) and (iii). 
	First, the mean value theorem and (H2) imply 
	\[
	h(x-1) \ge h(x)-h'(x)
	\]
	for every $x\ge1$. Putting $x=h^{-1}(F(N)+1)$, we obtain the inequality (ii). 
	Next, in order to prove (iii), we show 
	\begin{equation}
		\lim_{x\to\infty} \frac{(h'\circ h^{-1}\circ\tilde{F}\circ h')(x)}{h'(x)} = 0. \label{eq2}
	\end{equation}
	Using $h'(x)=\tilde{F}^{-2}(x)$, we obtain 
	\begin{align*}
		&\quad \frac{(h'\circ h^{-1}\circ\tilde{F}\circ h')(x)}{h'(x)}
		= \frac{(h'\circ h^{-1}\circ\tilde{F}^{-1})(x)}{h'(x)}\\
		&\overset{\text{(iv)}}{\le} \frac{(h'\circ\tilde{F}^{-1})(x)}{h'(x)}
		= \frac{\tilde{F}^{-3}(x)}{\tilde{F}^{-2}(x)}
		\overset{\text{(v)}}{\le} \frac{\log\tilde{F}^{-2}(x)}{\tilde{F}^{-2}(x)}
		\xrightarrow{x\to\infty} 0,
	\end{align*}
	where (iv) and (v) follow from fact~\ref{f4} in Step~2 and property~\ref{p2} in Step~1, respectively. 
	From property~\ref{p1} in Step~1, (H2), and \eqref{eq2}, it follows immediately that 
	\[
	\frac{h'\bigl( h^{-1}(F(N)+1) \bigr)}{N} \le \frac{h'\bigl( h^{-1}(\tilde{F}(N)) \bigr)}{N} \xrightarrow{N\to\infty} 0,
	\]
	which is just (iii).
	\par
	Summarizing the above two cases \eqref{eq3} and \eqref{eq4}, we have 
	\[
	\limsup_{N\to\infty} \frac{\max_{n\ge0} |A\cap[n+1,n+N]|}{N}
	= \limsup_{N\to\infty} \frac{|A\cap[F(N)+1,F(N)+N]|}{N}
	\le 4\epsilon.
	\]
	Since $0<\epsilon<1/4$ is arbitrary, the set $A$ does not have positive upper Banach density, which is a contradiction.
\end{proof}

\begin{proof}[Proof of Theorem~$\ref{SZ}$]
	Thanks to Proposition~\ref{main5}, it is enough to show that 
	every set $A\subset\mathbb{N}$ with positive upper Banach density satisfies (C). 
	Let $k\geq 3$ be an integer. 
	Lemma~\ref{SZlma1} implies that there exist a function $h=h_\alpha$ (to be independent of $\alpha$) 
	satisfying (H1)--(H4) and a positive number $\delta\le1$ such that
	\[
	\limsup_{x\to \infty} \frac{|A\cap(h(x),h(x+1)]|}{|\mathbb{N}\cap(h(x),h(x+1)]|} = \delta.
	\]
	This equation implies that there exists a strictly increasing sequence $\{x_n\}_{n=1}^\infty\subset\mathbb{R}^+$ which diverges such that 
	every integer $n>0$ satisfies 
	\[
	|A\cap(h(x_n),h(x_n+1)]| \geq \frac{\delta}{2}|\mathbb{N}\cap(h(x_n),h(x_n+1)]|.
	\]
	We can take an integer $n_0>0$ satisfying $h'(x_{n_0}) \ge N(\delta/2,k)$ due to (H4). 
	Since the mean value theorem implies $h(x_n+1) - h(x_n) \ge h'(x_n)$ for all $n\in\mathbb{N}$, 
	every integer $n\ge n_0$ satisfies 
	\[
	h(x_n+1) - h(x_n) \ge h'(x_n) \ge h'(x_{n_0}) \ge N(\delta/2,k).
	\]
	Thus Proposition~\ref{szemeredi} implies that the set $A\cap(h(x_n),h(x_n+1)]$ with $n\ge n_0$ contains an AP of length $k$. 
	Therefore, every $A\subset\mathbb{N}$ with positive upper Banach density satisfies (C).
\end{proof}

\section{Future work}\label{FW}
\begin{question}\label{ques1}
Suppose that a twice differentiable function $f: \mathbb{R}^+\to \mathbb{R}$ satisfies \eqref{SCS1}.
If a strictly increasing sequence $\{a(n)\}_{n=1}^{\infty}\subset \mathbb{N}$ can be written as
\[
	a(n)=f(n)+O(f'(n)),
\]
then does the sequence $\{a(n)\}_{n=1}^{\infty}$ contain arbitrarily long arithmetic progressions?
\end{question}

We do not know the answer to this question, 
but it is affirmative when the coefficients of the error term $O(f'(n))$ are contained in a finite set $\{c_1,c_2,\ldots,c_m\}$. 
See Appendix~\ref{appC}. Hence Question~\ref{ques1} is probably affirmative. 
As a question related to Question~\ref{ques1}, we are interested in how large the error term can be taken. 
In particular, we do not know whether the error term $O(f'(n))$ is best or not.

Next, we remark that the sequence of all primes is a slightly curved sequence: 
the asymptotic expansion \cite{Cipolla} 
\[
	p_n = f(n) + o\Bigl( \frac{n}{\log n} \Bigr)
\]
holds. Here $p_n$ is the $n$-th prime and 
the function $f(x)$ is 
\[
x\{\log x + \log\log x - 1 + (\log\log x - 2)/\log x\},
\]
which satisfies $f''(x) = O(1/x)$. 
Thus $\{p_n\}_{n=1}^\infty$ is a slightly curved sequence with error $o(n/\log n)$. 
If we can improve the error term $O((\log\log n)^{1/c_k})$ in Theorem~\ref{main2} to $o(n/\log n)$, 
the set of all primes contains arbitrarily long arithmetic progressions, 
which was shown by Green and Tao \cite{GreenTao1}. 
If assuming the Riemann hypothesis, the evaluation 
\[
	|p_n - \mathrm{li}^{-1}(n)|\leq \frac{1}{\pi}\sqrt{n}(\log n)^{5/2}
\]
holds \cite[Theorem~6.1]{AriasToulisse}, where $\mathrm{li}^{-1}(x)$ is the inverse function of the logarithmic integral function $\mathrm{li}(x)$. 
Thus $\{p_n\}_{n=1}^\infty$ is a slightly curved sequence with error $O(\sqrt{n}(\log n)^{5/2})$ if assuming the Riemann hypothesis.

\begin{question}\label{ques1.5}
Does the set of all primes satisfy (C)? 
In particular, does the graph of $\{a(p)\}_{p\ \mathrm{prime}}$ contains arbitrarily long arithmetic progressions 
if $\{a(n)\}_{n=1}^\infty$ is a slightly curved sequence with error $O(1)$?
\end{question}

Since the set of all primes does not have positive upper Banach density \cite[Corollary~3.4]{MV}, 
we can not apply Theorem~\ref{SZ} to $\{a(p)\}_{p\ \mathrm{prime}}$. 
Nevertheless, we can answer this question if replacing $p$ with $\mathrm{li}^{-1}(n) + O(1)$.
See Theorem~\ref{appA5}.

\begin{question}\label{ques2}
Is it true that
\[
\sup\Set{a\ge1 | \text{the sequence $\{\lfloor{n^a}\rfloor\}_{n=1}^\infty$ contains arbitrarily long APs}}=2? 
\]
\end{question}

Instead of answering this question, 
we show that the \textit{graph} of $\{\lfloor{n^a}\rfloor\}_{n=1}^\infty$ with $a\ge1$ contains an AP of length $3$ if and only if $1\le a<2$. 
The if part follows from Corollary~\ref{main3}. 
We show the only if part, i.e., 
the graph of $\{\lfloor{n^a}\rfloor\}_{n=1}^\infty$ with $a\ge2$ does not contain any APs of length $3$ by contradiction. 
Suppose that a sequence $\{\lfloor{(e+dj)^a}\rfloor\}_{j=0}^2$ is an AP 
for some two integers $d,e>0$ and some $a\ge2$. 
Then the inequality $|(e+2d)^a + e^a - 2(e+d)^a| < 2$ holds. 
The mean value theorem implies that 
\[
(e+2d)^a + e^a - 2(e+d)^a = d^2 a(a-1)(e + d\theta + d\eta)^{a-2},
\]
where $\theta$ and $\eta$ are real numbers satisfying $\theta,\eta\in(0,1)$. 
Thus it follows that 
\[
2 > |(e+2d)^a + e^a - 2(e+d)^a|
= d^2 a(a-1)(e + d\theta + d\eta)^{a-2} \ge 2,
\]
which is a contradiction.

The above argument implies 
\[
\sup\Set{a\ge1 | \text{the graph of $\{\lfloor{n^a}\rfloor\}_{n=1}^\infty$ contains an AP of length $3$}}=2,
\] 
but we do not achieve the answer to Question~\ref{ques2}.

\appendix
\section{Slightly curved sequences without the assumption $f''(x) = O(1/x^\alpha)$}\label{appA}
As stated in Section~\ref{section1}, Theorem~\ref{main2} requires the assumption $f''(x) = O(1/x^\alpha)$ for some $\alpha>0$. 
This appendix addresses a slightly curved sequence with error $O(1)$ without this assumption. 
First, the following theorem holds: 

\begin{theorem}\label{appA1}
	Let $f$ be a twice differentiable function satisfying $\lim_{x\to\infty} f''(x)=0$ and 
	$\{a(n)\}_{n=1}^\infty\subset\mathbb{N}$ be a strictly increasing sequence satisfying $a(n) = f(n) + O(1)$. 
	Then the graph of $\{a(n)\}_{n=1}^\infty$ contains arbitrarily long APs.
\end{theorem}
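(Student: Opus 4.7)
The plan is to adapt the proof of Proposition~\ref{main1} to the present setting, in which $f''(x)\to 0$ replaces the polynomial decay $f''(x) = O(1/x^\alpha)$. The only role that the polynomial decay played in Proposition~\ref{main1} was to furnish, for each large $r$, an explicit starting point $\lfloor W(r,k)^{3/\alpha}\rfloor$ for the search interval at which $|f''|$ was already smaller than $W(r,k)^{-3}$. Under the weaker hypothesis $f''(x)\to 0$ such a starting point still exists, only no longer explicitly; that is the single change I would make.

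Concretely, I would write $R(n) = a(n) - f(n)$, pick an integer $M$ with $|R(n)|\le M$ for all $n$, and set $r = 4kM$. Both $r$ and $W := W(r,k)$ then depend only on $k$ and $M$, and (crucially) need not be sent to infinity. Let $\epsilon(x) = \sup_{y\ge x} |f''(y)|$, which is non-increasing and tends to $0$ by hypothesis, and choose an integer $N_0$ large enough that
\[
\epsilon(N_0+1) \cdot W^2 < \frac{2}{k(k-2)}.
\]
Partition the interval $I = \{N_0+1, N_0+2, \ldots, N_0+W\}$ by coloring each $n \in I$ according to which of the $r$ equal subintervals of $[-M, M]$ contains $R(n)$. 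Van der Waerden's theorem produces a monochromatic arithmetic progression $P = \{e+jd\}_{j=0}^{k-1} \subset I$, and automatically $e \ge N_0+1$ and $d \le W$.

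The remainder of the proof is then identical to the end of the argument for Proposition~\ref{main1}. The mean value theorem gives $\Delta^2[f\circ b](j) = d^2 f''(e+d(j+\theta_j+\eta_j))$ for some $\theta_j,\eta_j\in(0,1)$, with $b(x)=dx+e$, whence
\[
N_P(f) \le (k-2)\, W^2\, \epsilon(N_0+1) < \frac{2}{k}.
\]
Monochromaticity of $P$ yields $N_P(R) \le 2(k-2)(2M/r) = (k-2)/k = 1-2/k$, exactly as in the proof of Proposition~\ref{main5}. Property (N3) then forces $N_P(a) \le N_P(f) + N_P(R) < 1$, and since $N_P(a)$ is a non-negative integer, (N1) implies that $a(P)$ is an AP of length $k$. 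As $k\ge 3$ was arbitrary, the graph of $\{a(n)\}_{n=1}^{\infty}$ contains arbitrarily long APs.

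I expect no genuine obstacle. The one conceptual difference from Proposition~\ref{main1} is that, because the error is now $O(1)$ rather than $o(U_k(n))$, one cannot drive the $R$-contribution to zero by letting $r\to\infty$; instead $r$ must be frozen at the threshold $4kM$ that pins down $N_P(R) \le 1-2/k$, and the resulting slack of $2/k$ is absorbed by choosing $N_0$ large. This is precisely the step where $f''(x)\to 0$ is used, and it is the only step where it is needed.
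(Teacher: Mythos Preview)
Your proposal is correct and is essentially the paper's own argument: the paper derives Theorem~\ref{appA1} as an immediate consequence of the quantitative Theorem~\ref{appA2}, whose proof is precisely the van der Waerden coloring you describe (fix $r$ large enough that $1/(k-2)-2M/r>0$, pick $n_0$ so that $|f''|$ is small past $n_0$, find a monochromatic AP $P$ in $[n_0,n_0+W-1]$, and bound $N_P(f)+N_P(R)<1$). The only cosmetic differences are that the paper uses the sharper bound $(k-1)d\le W-1$ in place of your $d\le W$, and records the resulting explicit threshold on $\limsup|f''|$ as a separate theorem.
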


Of course, Theorem~\ref{main2} does not completely contain Theorem~\ref{appA1}. 
For example, the function $f(x) = \int_2^x \mathrm{Li}(t)\,dt$ satisfies the assumption in Theorem~\ref{appA1} 
but does not satisfy the assumption in Theorem~\ref{main2} because $f''(x) = 1/\log x$. 
Here $\mathrm{Li}(t)$ is the offset logarithmic integral function, i.e., $\mathrm{Li}(t) := \int_2^t (1/\log s)\,ds$. 
Theorem~\ref{appA1} is derived from Theorem~\ref{appA2} below immediately, which is more exact. 
For example, when length $k$ is given, Theorem~\ref{appA2} implies that the graph of $\{ \fint{c_2 n^2 + c_1 n + c_0} \}_{n=1}^\infty$ 
with a sufficiently small $c_2>0$ contains an AP of length $k$, 
but Theorem~\ref{appA1} does not imply this statement.

\begin{theorem}\label{appA2}
	Let $k\ge3$ and $r\ge1$ be integers, $f$ be a twice differentiable function, 
	$R(n)$ be a bounded function satisfying $M_1\le R(n)\le M_2$ for some two real numbers $M_1$ and $M_2$, and 
	$\{a(n)\}_{n=1}^\infty\subset\mathbb{N}$ be a strictly increasing sequence satisfying $a(n) = f(n) + R(n)$. 
	If the inequality 
	\begin{equation}
		\limsup_{x\to\infty} |f''(x)| < \Bigl( \frac{k-1}{W(r,k)-1} \Bigr)^2 \Bigl( \frac{1}{k-2} - \frac{2(M_2 - M_1)}{r} \Bigr) \label{eqappA4}
	\end{equation}
	holds, then the graph of $\{a(n)\}_{n=1}^\infty$ contains an AP of length $k$.
\end{theorem}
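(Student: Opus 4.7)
The plan is to track all constants exactly in the argument of Proposition~\ref{main1} (equivalently, in the van der Waerden pigeonhole step of Proposition~\ref{main5}), rather than absorbing them into $o(1)$'s; the quantitative hypothesis \eqref{eqappA4} will then drop out by matching the desired bound $N_P(a)<1$. Set $L:=\limsup_{x\to\infty}|f''(x)|$ and pick a small $\epsilon>0$. Choose $N_0$ large enough that $|f''(x)|\le L+\epsilon$ for all $x\ge N_0$. Partition $[M_1,M_2]$ into $r$ equal half-open subintervals of width $(M_2-M_1)/r$, and color each $n\in\{N_0,N_0+1,\ldots,N_0+W(r,k)-1\}$ by the index of the subinterval containing $R(n)$. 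By definition of $W(r,k)$, some color class contains an arithmetic progression $P=\{b(j)\}_{j=0}^{k-1}$ with $b(j)=dj+e$; since $P$ fits inside a window of exactly $W(r,k)$ consecutive integers, $(k-1)d\le W(r,k)-1$, whence $d\le(W(r,k)-1)/(k-1)$, while $e\ge N_0$.

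Next I bound the semi-norm $N_P$ on $R$ and $f$ separately. Because all $R(b(j))$ lie in a single subinterval of width $(M_2-M_1)/r$, each first difference $|\Delta[R\circ b](j)|$ is at most $(M_2-M_1)/r$, so the same telescoping used in Propositions~\ref{main1} and \ref{main5} yields
\[
N_P(R)\le\sum_{j=0}^{k-3}\bigl(|\Delta[R\circ b](j+1)|+|\Delta[R\circ b](j)|\bigr)\le\frac{2(k-2)(M_2-M_1)}{r}.
\]
For $f$, two applications of the mean value theorem give $\Delta^2[f\circ b](j)=d^2 f''\bigl(b(j)+d(\theta_j+\eta_j)\bigr)$ for some $\theta_j,\eta_j\in(0,1)$; since the argument is $\ge e\ge N_0$, we get
\[
N_P(f)\le(k-2)d^2(L+\epsilon)\le(k-2)\Bigl(\frac{W(r,k)-1}{k-1}\Bigr)^2(L+\epsilon).
\]

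Finally, by (N3) we have $N_P(a)\le N_P(f)+N_P(R)$. Rearranging \eqref{eqappA4} to
\[
(k-2)\Bigl(\frac{W(r,k)-1}{k-1}\Bigr)^2 L+\frac{2(k-2)(M_2-M_1)}{r}<1,
\]
one can choose $\epsilon>0$ small enough that the same inequality persists with $L$ replaced by $L+\epsilon$, which gives $N_P(a)<1$. Since $N_P(a)$ is a non-negative integer, $N_P(a)=0$, and (N1) then implies that $a(P)$ is an AP of length $k$, so the graph contains the AP $\{(b(j),a(b(j)))\}_{j=0}^{k-1}$.

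The main obstacle is essentially nil; the argument is a quantitative repackaging of Proposition~\ref{main1}. The only subtle point is that the sharp inequality $d\le(W(r,k)-1)/(k-1)$, forced by squeezing the AP into a window of exactly $W(r,k)$ integers, is what produces the factor $\bigl(\frac{k-1}{W(r,k)-1}\bigr)^2$ in \eqref{eqappA4}; any slack there (for instance, working in a window of length $2W(r,k)$) would yield a strictly weaker theorem.
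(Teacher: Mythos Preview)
Your argument is correct and is essentially identical to the paper's proof: both place a window of $W(r,k)$ consecutive integers beyond a threshold where $|f''|$ is small, color by which of the $r$ subintervals of $[M_1,M_2]$ contains $R(n)$, extract a monochromatic AP $P$ with $(k-1)d\le W(r,k)-1$, and combine the bounds $N_P(R)\le 2(k-2)(M_2-M_1)/r$ and $N_P(f)\le(k-2)d^2\sup|f''|$ to force $N_P(a)<1$. The only cosmetic difference is that you introduce $L$ and $\epsilon$ explicitly, whereas the paper absorbs this into the choice of $n_0$ via the strict inequality in \eqref{eqappA4}; one trivial nitpick is that half-open subintervals may miss the endpoint $M_2$, but this is immediately fixed by closing the last interval (as the paper does).
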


In general, the van der Waerden number $W(r,k)$ is large and thus 
the effect of the factor $W(r,k)$ is larger than that of the other factors of the right-hand side in \eqref{eqappA4}. 
Hence $W(r,k)$ should be small in order to make the right-hand side in \eqref{eqappA4} large. 
To make $W(r,k)$ smallest, we should make $r$ smallest. 
If the right-hand side in \eqref{eqappA4} is positive, 
the smallest integer $r>0$ is 
\begin{equation}
	r = \fint{2(M_2-M_1)(k-2)} + 1 > 2(M_2-M_1)(k-2). \label{eqappA3}
\end{equation}
Thus the integer $r$ with \eqref{eqappA3} makes $W(r,k)$ smallest. 
Let us use \eqref{eqappA3}. 
When $M_2-M_1=1$ and $k=3$, the right-hand side in \eqref{eqappA4} with \eqref{eqappA3} equals $1/(3\cdot13^2)$ because $r=3$ and $W(3,3)=27$ \cite{Chvatal}.

\begin{proof}
	Thanks to \eqref{eqappA4}, 
	there exists an integer $n_0>0$ such that every $x\ge n_0$ satisfies 
	\[
	|f''(x)| < \Bigl( \frac{k-1}{W-1} \Bigr)^2 \Bigl( \frac{1}{k-2} - \frac{2M}{r} \Bigr),
	\]
	where $W = W(r,k)$ and $M = M_2 - M_1$. 
	Then we define the following sets: 
	\begin{gather*}
		I_j = \Bigl[ M_1 + \frac{M}{r}j, M_1 + \frac{M}{r}(j+1) \Bigr]\quad(0\le j\le r-1),\\
		A = \mathbb{N}\cap[n_0, n_0+W-1],\\
		A_j = \set{n\in A | R(n)\in I_j}\quad(0\le j\le r-1).
	\end{gather*}
	It can be easily checked that the interval $[M_1,M_2]$ is the union of all the small intervals $I_j$ and the set $A$ is the union of all the small sets $A_j$. 
	Hence, there exists an integer $0\le q\le r-1$ such that $A_q$ contains an arithmetic progression $P=\{b(j)\}_{j=0}^{k-1}$. 
	Here $b(j)$ is expressed as $b(j) = dj + e$ with two integers $d,e>0$. 
	The inequalities $(k-1)d\le W-1$ and $e\ge n_0$ imply 
	\begin{align}
	\begin{split}
		N_P(f) &= \sum_{j=0}^{k-3} \abs{\Delta^2[f\circ b](j)}
		= \sum_{j=0}^{k-3} d^2\abs{f''(d(j + \theta_j + \eta_j) + e)}\\
		&< \sum_{j=0}^{k-3} \Bigl( \frac{W-1}{k-1} \Bigr)^2 \Bigl( \frac{k-1}{W-1} \Bigr)^2 \Bigl( \frac{1}{k-2} - \frac{2M}{r} \Bigr)
		= 1 - 2(k-2)\frac{M}{r},
	\end{split}\label{eqappA1}
	\end{align}
	where $\theta_j=\theta_j(d,e)$ and $\eta_j=\eta_j(d,e)$ are real numbers satisfying $\theta_j,\eta_j\in(0,1)$. 
	Moreover, the relation $P\subset A_q$ implies 
	\begin{equation}
		N_P(R) = \sum_{j=0}^{k-3} \abs{\Delta^2[R\circ b](j)}
		\le \sum_{j=0}^{k-3} (\abs{\Delta[R\circ b](j+1)} + \abs{\Delta[R\circ b](j)})
		\le 2(k-2)\frac{M}{r}. \label{eqappA2}
	\end{equation}
	Hence the inequality $N_P(a) < 1$ follows from \eqref{eqappA1} and \eqref{eqappA2}. 
	Since $N_P(a)$ is a non-negative integer, the equation $N_P(a)=0$ follows: $a(P)$ is an AP of length $k$. 
	Therefore, the graph of $\{a(n)\}_{n=1}^\infty$ contains an AP of length $k$.
\end{proof}

\begin{remark}
As proved above, van der Waerden's theorem implies Theorem~\ref{appA2} and then Theorem~\ref{appA2} implies Theorem~\ref{appA1}, i.e., 
van der Waerden's theorem implies Theorem~\ref{appA1}. 
Surprisingly, the converse also holds, and thus Theorem~\ref{appA1} is equivalent to van der Waerden's theorem. 
Let us verify the converse. 
Here it is well-known that van der Waerden's theorem is equivalent to the statement that 
if the set of all positive integers is partitioned into finitely many sets 
then at least one set must contain arbitrarily long APs. 
We now show that Theorem~\ref{appA1} implies this statement. 
Suppose that $\mathbb{N}=C_1\cup C_2\cup \cdots \cup C_r$, where $C_i$ and $C_j$ are disjoint for every $1\leq i< j\leq r$. 
We define $R(n)=j$ if $n\in C_j$ and also define $a(n)=rn+R(n)$ for all $n\in\mathbb{N}$. 
Then $\{a(n)\}_{n=1}^\infty$ is a strictly increasing sequence. 
Due to Theorem~\ref{appA1}, the graph of $\{a(n)\}_{n=1}^\infty$ contains arbitrarily long APs. 
Fix an arbitrary integer $k>r$. Then there exists an arithmetic progression $P=\{b(j)\}_{j=0}^{k-1}$ such that 
\begin{equation*}
	0 = N_P(a) = \sum_{j=0}^{k-3} |\Delta^2 [a\circ b ](j)|
	= \sum_{j=0}^{k-3} |\Delta^2 [R\circ b ](j)| = N_P(R).
\end{equation*}
Since the inequalities $1\leq R(n)\leq r$ and $k>r$ hold, the set $R(P)$ is not an AP of length $k$: 
$R(P)=\{j(k)\}$ for some $1\leq j(k)\leq r$, which implies that $C_{j(k)}$ contains an AP of length $k$. 
From the finiteness of the range $1\leq j(k)\leq r$, there exist $1\leq j_0\leq r$ and a sequence $k_1<k_2<\cdots$ such that
\[
C_{j_0}=C_{j(k_1)}=C_{j(k_2)}=\cdots 
\] 
Note that $j_0$ does not depend on $k$. 
Thus $C_{j_0}$ contains arbitrarily long APs. 
From the above discussion, Theorem~\ref{appA1} is equivalent to van der Waerden's theorem.
\end{remark}

We use the proof of Theorem~\ref{appA2} to prove the next theorem. 
As stated in Section~\ref{FW}, we cannot answer Question~\ref{ques1.5}, but the following theorem holds: 

\begin{theorem}\label{appA5}
	Let $\{a(n)\}_{n=1}^\infty$ be a slightly curved sequences with error $O(1)$ and 
	$g$ be a function satisfying the following properties: 
	\begin{itemize}
		\item[{\upshape(G1)}]
		$g$ is a twice differentiable function defined on the interval $(x_0,\infty)$ for some $x_0>0$; 
		\item[{\upshape(G2)}]
		there exists a positive number $x_1\ge x_0$ such that $g$ and $g'$ increase on the interval $(x_1,\infty)$; 
		\item[{\upshape(G3)}]
		$\lim_{x\to\infty} g'(2x)/g(x)^\alpha = 0$ for all $\alpha>0$; 
		\item[{\upshape(G4)}]
		$\lim_{x\to\infty} g''(x) = 0$.
	\end{itemize}
	If $A=\{\tilde{a}(n)\}_{n=n_0}^\infty$ with $n_0>x_1$ is a strictly increasing sequence satisfying $\tilde{a}(n)=g(n)+O(1)$, 
	then the graph of $\{a(n)\}_{n\in A}$ contains arbitrarily long APs.
\end{theorem}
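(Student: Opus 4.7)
The plan is to stack two van der Waerden arguments: an outer one using (G4) and Theorem~\ref{appA2} to produce long APs inside $A$, and an inner one using $f''=O(1/x^\alpha)$ to upgrade a sub-AP of such an AP to an AP in the graph of $a$, with (G3) closing the quantitative gap between the two levels.

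Write $a(n)=f(n)+R(n)$ with $|R(n)|\le M_f$ and $f''(x)=O(1/x^\alpha)$, and $\tilde{a}(n)=g(n)+R_g(n)$ with $|R_g(n)|\le M_g$. Fix $k\ge 3$, choose $r>8(k-2)M_f$ so the inner coloring will give $N_P(R)<1/2$, and set $L=W(r,k)$. For the outer step, since $g''(x)\to 0$ by (G4), Theorem~\ref{appA2} applied to $\tilde{a}$ with target length $L$ (and a suitable $r_0$ depending on $L$ and $M_g$, chosen so the right-hand side of \eqref{eqappA4} is positive) produces, for any prescribed threshold $n_\ast$, indices $e_{\mathrm{idx}}\ge n_\ast$ and $d_{\mathrm{idx}}\le W(r_0,L)-1$ such that $\{\tilde{a}(e_{\mathrm{idx}}+l d_{\mathrm{idx}})\}_{l=0}^{L-1}$ is an AP, say $\{E+lD\}_{l=0}^{L-1}$, lying entirely inside $A$. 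The mean value theorem together with (G2) yields $D\le g'(2e_{\mathrm{idx}})\,d_{\mathrm{idx}}+O(1)$ whenever $n_\ast\ge W(r_0,L)-1$, while $E\ge g(e_{\mathrm{idx}})-M_g$.

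The inner step imitates the proofs of Proposition~\ref{main1} and Theorem~\ref{appA2}: partition $[-M_f,M_f)$ into $r$ equal subintervals and color each $l\in\{0,\ldots,L-1\}$ by which subinterval contains $R(E+lD)$. Since $L=W(r,k)$, van der Waerden supplies a monochromatic $k$-term AP, yielding $P=\{b(j)\}_{j=0}^{k-1}$ with $b(j)=E+(l_0+j\delta)D$, common difference $\delta D\le LD$, and starting point $\ge E$, with $P\subset A$. By (N3), $N_P(a)\le N_P(f)+N_P(R)$; monochromaticity forces $N_P(R)\le 4(k-2)M_f/r<1/2$. Two applications of the mean value theorem combined with $|f''(x)|\le C/x^\alpha$ give
\[
N_P(f)\le (k-2)(\delta D)^2\sup_{x\ge E}|f''(x)|\le C'\frac{L^2 D^2}{E^\alpha}\le C''\frac{L^2\,g'(2e_{\mathrm{idx}})^2}{g(e_{\mathrm{idx}})^\alpha}.
\]
Hypothesis (G3) with exponent $\alpha$ says $g'(2x)/g(x)^{\alpha/2}\to 0$, so this bound vanishes as $e_{\mathrm{idx}}\to\infty$; choosing $n_\ast$ large enough forces $N_P(f)<1/2$. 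Then $N_P(a)<1$, and since $N_P(a)\in\mathbb{Z}_{\ge 0}$, $N_P(a)=0$, so $a(P)$ is a $k$-AP by (N1). Since $P\subset A$, this is a $k$-AP in the graph of $\{a(n)\}_{n\in A}$, and $k$ was arbitrary.

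The main obstacle is that the outer application of Theorem~\ref{appA2} only directly controls the index-level common difference $d_{\mathrm{idx}}$, whereas the inner van der Waerden argument needs a quantitative bound on the value-level common difference $D$ relative to the starting value $E$. Converting between the two uses the mean value theorem and (G2) to produce $D\lesssim g'(2e_{\mathrm{idx}})\,d_{\mathrm{idx}}$, after which $D^2/E^\alpha$ is dominated by $g'(2e_{\mathrm{idx}})^2/g(e_{\mathrm{idx}})^\alpha$ up to the fixed factor $W(r_0,L)^2$; it is precisely hypothesis (G3), applied with this specific $\alpha$, that forces this ratio to vanish and closes the argument.
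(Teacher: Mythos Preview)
Your proof is correct and follows the same two-layer strategy as the paper: use (G4) together with Theorem~\ref{appA2} to locate long APs inside $A$, then run an inner van der Waerden/second-derivative argument with (G3) supplying the decisive estimate $g'(2x)^2/g(x)^\alpha\to 0$. The only difference is organizational: the paper routes the inner step through the abstraction of condition~(C) and Proposition~\ref{main5}, choosing the auxiliary function $h_\alpha(x)=x^{1+\alpha/2}$ and verifying the inclusion $A\cap(h_\alpha(x_n),h_\alpha(x_n+1)]\supset A\cap[\tilde a(n),\tilde a(n+W-1)]$, whereas you bypass that machinery and apply the coloring argument directly to the $W(r,k)$-term AP produced by Theorem~\ref{appA2}, bounding the value-level common difference $D$ and starting point $E$ straight in terms of $g'$ and $g$.
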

\begin{proof}
	Thanks to Proposition~\ref{main5}, it is enough to show that 
	the set $A=\{\tilde{a}(n)\}_{n=n_0}^\infty$ satisfies (C). 
	Let $k\ge3$ be an integer and $\alpha$ be a positive number. 
	We choose the function $h_\alpha$ as $h_\alpha(x)=x^\beta$ with $\beta=1+\alpha/2$, 
	which satisfies (H1)--(H4). Hence all we need is to show that 
	there exists a strictly increasing sequence $\{x_n\}_{n=1}^\infty\subset\mathbb{R}^+$ which diverges such that 
	for every $n>0$ the set $A\cap(h_\alpha(x_n),h_\alpha(x_n+1)]$ contains an AP of length $k$.
	\par
	Take two real numbers $M_1$ and $M_2$ satisfying $M_1 \le \tilde{a}(n) - g(n) \le M_2$ and the integer $r$ with \eqref{eqappA3}, 
	and let $W=W(r,k)$ and $M=M_2-M_1$. 
	Since $g$ satisfies (G4), Theorem~\ref{appA2} can be applied to $A=\{\tilde{a}(n)\}_{n=n_0}^\infty$. 
	More precisely, the proof of Theorem~\ref{appA2} implies that 
	there exists an integer $n_1\ge n_0$ such that for every $n\ge n_1$ 
	the set $A\cap[\tilde{a}(n),\tilde{a}(n+W-1)]$ contains an AP of length $k$. 
	Thus (C) can be reduced to the inclusion relation 
	\begin{equation}
		A\cap(h_\alpha(x_n),h_\alpha(x_n+1)] \supset A\cap[\tilde{a}(n),\tilde{a}(n+W-1)]. \label{eqY2}
	\end{equation}
	Let us show \eqref{eqY2}. Put $x_n = (g(n)+M_1-1)^{1/\beta} < \tilde{a}(n)^{1/\beta}$. 
	Then $h_\alpha(x_n) = g(n)+M_1-1 < \tilde{a}(n)$. 
	The mean value theorem implies 
	\begin{align}
		\tilde{a}(n+W-1) &\le g(n+W-1) + M_2
		\le g(n) + (W-1)g'(n+W-1) + M_2\nonumber\\
		&= h_\alpha(x_n) - (M_1-1) + (W-1)g'(n+W-1) + M_2\nonumber\\
		&= h_\alpha(x_n) + (W-1)g'(n+W-1) + M + 1, \label{eqY5}\\
		h_\alpha(x_n+1) &\ge h_\alpha(x_n) + h'_\alpha(x)
		= h_\alpha(x_n) + \beta x_n^{\beta-1} \label{eqY6}
	\end{align}
	due to (G2). Using (G2), (G3), and $\lim_{x\to\infty} g(x)=\infty$ 
	(this limit follows from the fact that $\{\tilde{a}(n)\}_{n_0}^\infty$ is a strictly increasing sequence), 
	we have 
	\[
	\lim_{x\to\infty} \frac{(W-1)g'(x+W-1) + M + 1}{\beta (g(x) + M_1 - 1)^{1-1/\beta}}
	\le \lim_{x\to\infty} \frac{(W-1)g'(2x) + M + 1}{(g(x)/2)^{1-1/\beta}} = 0,
	\]
	whence $(W-1)g'(n+W-1) + M + 1 = o(\beta x_n^{\beta-1})$. 
	Thus there exists an integer $n_2\ge n_1$ such that every $n\ge n_2$ satisfies 
	\begin{equation}
		(W-1)g'(n+W-1) + M + 1 \le \beta x_n^{\beta-1}. \label{eqY7}
	\end{equation}
	The equations \eqref{eqY5}, \eqref{eqY6}, and \eqref{eqY7} yield 
	$h_\alpha(x_n+1)\ge\tilde{a}(n+W-1)$ for every $n\ge n_2$. 
	Therefore, the inclusion relation \eqref{eqY2} holds for every $n\ge n_2$.
\end{proof}

\begin{remark}
	The function $g(x)=\mathrm{li}^{-1}(x)$ satisfies (G1)--(G4). (The definition of $\mathrm{li}^{-1}(x)$ is in Section~\ref{FW}.) 
	Moreover, when $\tilde{a}(n)=\mathrm{li}^{-1}(n)+O(1)$, 
	the sequence $A=\{\tilde{a}(n)\}_{n=n_0}^\infty$ for some $n_0$ is a strictly increasing sequence. 
	Therefore, if $\{a(n)\}_{n=1}^\infty$ is a slightly curved sequence with error $O(1)$, 
	the graph of $\{a(n)\}_{n\in A}$ contains arbitrarily long APs.
\end{remark}

\section{Result similar to Theorem~\ref{appA2}}\label{appB}
This appendix states a result similar to Theorem~\ref{appA2}, which is not completely contained by Theorem~\ref{appA2}. 
Indeed, Theorem~\ref{appB3} below implies that the graph of 
$\{ \fint{c_2 n^2 + c_1 n + c_0} \}_{n=1}^\infty$ with $0<c_2<1/18$ contains an AP of length $4$, 
which is better evaluation than that of Theorem~\ref{appA2}.

\begin{theorem}\label{appB2}
	If a function $f: \mathbb{N}\to\mathbb{R}^+$ satisfies $\Delta f > 0$, $\Delta^2 f \ge 0$, and 
	\begin{equation}
		\sup_{x>0} N(x)^2/x = \infty, \label{eq:sup}
	\end{equation}
	then the graph of $\{ \fint{f(n)} \}_{n=1}^\infty$ contains an AP of length $4$. 
	Here $N(x)$ denotes the number of positive integers satisfying $f(n) \le x$, i.e., $N(x) := \abs{\set{n\in\mathbb{N} | f(n) \le x}}$.
\end{theorem}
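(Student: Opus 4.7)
The strategy is to combine a van der Waerden coloring of the fractional parts of $f$ with the seminorm machinery from Section~\ref{proofTh1}, carried out on a carefully chosen window where $\Delta f$ is nearly constant. I fix an integer $r\ge 5$, set $W=W(r,4)$, and let $T$ be a parameter that will be chosen as a large constant multiple of $W^2$. The hypothesis $\sup_{x>0} N(x)^2/x=\infty$ is equivalent to $\liminf_{n\to\infty} f(n)/n^2=0$, so there exist arbitrarily large integers $n$ with $f(n)<n^2/T$. I may also assume $\Delta f(1)\ge 1$ via the preliminary reduction of replacing $f$ with the map $n\mapsto f(Dn)$ for an integer $D\ge 1/\Delta f(1)$; this preserves $\Delta f>0$, $\Delta^2 f\ge 0$, and the condition $\sup N(x)^2/x=\infty$, and any AP produced for the rescaled function lifts to an AP for the original graph. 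Under this reduction $\Delta_d f(m)\ge d\ge 1$ for every $m,d\ge1$, which will ultimately force the vertical common difference of the AP to be at least $1$.

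The heart of the argument is to locate $W$ consecutive integers inside $[1,\lfloor n/2\rfloor]$ on which $\Delta f$ varies very little. Convexity gives $\Delta f(m)\le f(n)/(n-m)\le 2n/T$ for every $m\le n/2$, so the non-decreasing sequence $\Delta f(1),\Delta f(2),\ldots,\Delta f(\lfloor n/2\rfloor)$ is confined to an interval of length at most $2n/T$. Partition this interval into about $n/(2W)$ equal sub-intervals, each of length at most $4W/T$; since $\Delta f$ is monotone, the pigeonhole principle yields $W$ consecutive indices $k_1,k_1+1,\ldots,k_1+W-1$ on which $\Delta f(k_1+W-1)-\Delta f(k_1)\le 4W/T$. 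For any AP $\{e+jd\}_{j=0}^{3}$ lying inside this window we have $d\le (W-1)/3$, and the identity $\Delta_d^2 f(m)=\sum_{i=0}^{d-1}[\Delta f(m+d+i)-\Delta f(m+i)]$ together with the flatness of $\Delta f$ on the window gives $\Delta_d^2 f(e),\ \Delta_d^2 f(e+d)\le d\cdot (4W/T)\le 4W^2/(3T)$.

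To finish, I color each $m\in[k_1,k_1+W-1]$ by $\lfloor r\{f(m)\}\rfloor\in\{0,1,\ldots,r-1\}$; van der Waerden's theorem produces a monochromatic $4$-AP $P=\{e+jd\}_{j=0}^{3}$ inside the window, so the four fractional parts $\{f(e+jd)\}$ all lie in a common interval of length $1/r$. Writing $a(n)=\lfloor f(n)\rfloor=f(n)+R(n)$ with $R(n)=-\{f(n)\}$, the seminorm $N_P$ of Section~\ref{proofTh1} satisfies $N_P(f)\le 8W^2/(3T)$ (by the bounds of the previous paragraph) and $N_P(R)<4/r$, since each of the two terms of $N_P(R)$ has the form $|2\{f(e+(j+1)d)\}-\{f(e+jd)\}-\{f(e+(j+2)d)\}|<2/r$ because the three relevant fractional parts share a common $1/r$-interval. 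Choosing $T$ so large that $8W^2/(3T)+4/r<1$, subadditivity (N3) gives $N_P(a)<1$; as $N_P(a)$ is a non-negative integer, $N_P(a)=0$, and property (N1) yields that $\lfloor f\rfloor|_{P}$ is an AP of length $4$ with strictly positive common difference thanks to the reduction in the first paragraph. The main obstacle throughout is that van der Waerden offers no control on the size of $d$, which can be as large as $W/3$; the preselection of a flat window is precisely what tames the potentially large $\Delta_d^2 f$, at the cost of forcing $T$ to be chosen $\gg W^2$.
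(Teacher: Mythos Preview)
Your argument is correct (up to harmless adjustments of the constants in the pigeonhole step), but it follows a genuinely different route from the paper's own proof. The paper never invokes van der Waerden or the seminorm $N_P$ for this theorem. Instead it shows that the hypothesis $\sup_x N(x)^2/x=\infty$ forces a window of \emph{ten} consecutive integers on which $\lfloor\Delta f\rfloor$ is constant, say equal to $c$; on that window the increments $\Delta\lfloor f\rfloor-c$ form a word over the alphabet $\{0,1\}$, and the purely combinatorial statement~(S$_3$) (every binary word of length at least $10$ contains three consecutive blocks of equal length and equal sum) immediately yields a $4$-AP in the graph.

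The trade-off is clear. The paper's proof is dramatically sharper quantitatively: it needs a window of length $10$, whereas your van der Waerden coloring of fractional parts requires a window of length $W(r,4)$ with $r\ge5$, an astronomically larger number. On the other hand, your approach is more systematic: it reuses the $N_P$ machinery of Section~\ref{proofTh1} verbatim and, unlike the word-combinatorics argument (which is tied to the alphabet $\{0,1\}$ and to length~$4$), it extends without change to produce APs of any prescribed length $k$ in the graph of $\{\lfloor f(n)\rfloor\}$ under the same hypotheses---simply take $r>2(k-2)$ and $W=W(r,k)$. So your method sacrifices the explicit small constant for a proof that is uniform in $k$ and sits naturally inside the paper's general framework.
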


\begin{remark}
Theorem~\ref{appB2} does not assume that the sequence $\{ \fint{f(n)} \}_{n=1}^\infty$ strictly increases, 
but when the function $f$ satisfies $\Delta f(n_0)\ge1$ for some $n_0>0$, 
the sequence $\{ \fint{f(n)} \}_{n=n_0}^\infty$ strictly increases due to the assumption $\Delta^2 f \ge 0$.
\end{remark}

Let $\Sigma$ be a nonempty finite set of non-negative integers. 
In order to show Theorem~\ref{appB2}, we address \textit{words} over the \textit{alphabet} $\Sigma$. 
Here we allow words to continue infinitely on the right side such as $000\cdots$, and to be the empty word. 
The length of a word $w$ (i.e., the number of all \textit{letters} of $w$) is denoted by $|w|$ and the sum of all letters of a word $w$ is denoted by $\sum w$. 
Then we focus on the following condition (C${}_k$) for a word $w$: 
\textit{there exist $k+1$ finite length words $w_0,\ldots,w_k$ and a word $w_{k+1}$ such that} 
\[
\text{$w = w_0 \cdots w_k w_{k+1}$, $|w_1|=\cdots=|w_k|>0$, and $\sum w_1=\cdots=\sum w_k$.}
\]
We also focus on the following statement: 
\textit{there exists an integer $n>0$ such that every word $w$ with $|w|\ge n$ satisfies the condition (C${}_k$). }
We call it the statement (S${}_k$). 
The statement (S${}_k$) depends on the alphabet $\Sigma$. 
Table~\ref{T1} summarizes integers $n$ in the statements (S${}_2$) and (S${}_3$) for several alphabets $\Sigma$.
The proof of Theorem~\ref{appB2} uses the statement (S${}_3$). 
The statement (S${}_3$) for $\Sigma=\{ 0,1 \}$ can be checked by taking $n=10$. 
Cassaigne et al.\ \cite{Cassaigne} proved that the statement (S${}_3$) did not hold for $\Sigma=\{ 0,1,3,4 \}$. 
For details, see \cite{Cassaigne}.

\begin{table}[t]
	\centering
	\caption{Integers $n$ in the statement (S${}_k$) for several alphabets $\Sigma$. 
	The case when $k=2$ and $\Sigma=\{0,1,2,3\}$ was computed by a personal computer. 
	If the statement (S${}_2$) for $\Sigma=\{0,1,2,3,4\}$ holds, then $n>152$ by computing.
	If the statement (S${}_3$) for $\Sigma=\{0,1,2\}$ holds, then $n>1288$ \cite{Cassaigne}.
	}\label{T1}
	\begin{tabular}{|c||c|c|c|c|}\hline
		   &$\Sigma=\{0,1\}$&$\Sigma=\{0,1,2\}$&$\Sigma=\{0,1,2,3\}$&$\Sigma=\{0,1,3,4\}$\\
		\hhline{|=#=|=|=|=|}
		$k=2$&$n=4$&$n=8$&$n=51$&   \\ \hline
		$k=3$&$n=10$&   &   &(S${}_3$) does not hold \cite{Cassaigne}.\\ \hline
	\end{tabular}
\end{table}

\begin{proof}[Proof of Theorem~$\ref{appB2}$]
	We show that for every integer $k>0$ there exists an integer $n_0>0$ such that 
	the function $\fint{\Delta f(n)}$ takes a constant value for every $n_0\le n\le n_0+k$, by contradiction. 
	Suppose that our assertion does not hold, namely, 
	for some integer $k>0$, there exists no positive integer $n_0$ such that 
	the function $\fint{\Delta f(n)}$ takes a constant value for every $n_0\le n\le n_0+k$. 
	Putting the function $\lambda(x) := \cint{N(x)/k}-1$, we find $N(x) = k N(x)/k > k\lambda(x)$. 
	Hence the inequality $N(x)\ge k\lambda(x)+1$ holds. 
	Then the assumption $\Delta^2 f(n)\ge0$ yields $\fint{\Delta f(n+1)} \ge \fint{\Delta f(n)}$. 
	Using this inequality and the assumption of the proof by contradiction to obtain the following (i), we have 
	\begin{align*}
		x &> f(k\lambda(x)+1) - f(1) = \sum_{j=1}^{k\lambda(x)} \Delta f(j)
		\ge \sum_{j=1}^{k\lambda(x)} \fint{\Delta f(j)}
		\overset{\text{(i)}}{\ge} k\sum_{j=0}^{\lambda(x)-1} j\\
		&= \frac{k}{2}\lambda(x)(\lambda(x)-1)
		\ge \frac{k}{2}\Bigl( \frac{N(x)}{k} - 1 \Bigr)\Bigl( \frac{N(x)}{k} - 2 \Bigr),
	\end{align*}
	that is, $(N(x)-k)(N(x)-2k) < 2kx$. 
	This inequality implies that $N(x)^2/x$ is upper bounded by a constant, 
	which contradicts the assumption \eqref{eq:sup}.
	\par
	Putting $k=9$, we can take two integers $n_0>0$ and $c\ge0$ satisfying $\fint{\Delta f(n)}=c$ for all $n_0\le n\le n_0+9$. 
	Thus $\Delta \fint{f(n)} - c\in\{ 0,1 \}$ for all $n_0\le n\le n_0+9$. 
	Since the statement (S${}_3$) holds for $\Sigma=\{ 0,1 \}$, 
	there exist two integers $d,e>0$ such that 
	\[
	\sum_{j=e}^{e+d-1} (\Delta \fint{f(j)} - c) = \sum_{j=e+d}^{e+2d-1} (\Delta \fint{f(j)} - c) = \sum_{j=e+2d}^{e+3d-1} (\Delta \fint{f(j)} - c).
	\]
	Thus the equation 
	\[
	\fint{f(e+d)} - \fint{f(e)} = \fint{f(e+2d)} - \fint{f(e+d)} = \fint{f(e+3d)} - \fint{f(e+2d)}
	\]
	holds: $\{ \fint{f(e+dj)} \}_{j=0}^3$ is an AP of length $4$. 
	That is, the graph of $\{ \fint{f(n)} \}_{n=0}^\infty$ contains an AP of length $4$.
\end{proof}

\begin{theorem}\label{appB3}
	If a function $f: \mathbb{N}\to\mathbb{R}^+$ satisfies $\Delta f > 0$, $\Delta^2 f \ge 0$, and 
	\begin{equation}
		\limsup_{x\to\infty} N(x)^2/x > 18, \label{eq:limsup}
	\end{equation}
	then the graph of $\{ \fint{f(n)} \}_{n=1}^\infty$ contains an AP of length $4$.
\end{theorem}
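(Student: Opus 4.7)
The plan is to reuse the machinery of the proof of Theorem~\ref{appB2} while tracking the constants precisely. The choice $k = 9$ in that proof is linked to the two ingredients of the argument in tandem: the window length $k + 1 = 10$ matches the value $n = 10$ for the statement $(\mathrm{S}_3)$ over $\Sigma = \{0, 1\}$ recorded in Table~\ref{T1}, and the factor $k/2 = 9/2$ in the telescoping estimate yields a leading coefficient $1/(2k) = 1/18$ on $N(x)^2$, which matches the threshold in the hypothesis $\limsup N(x)^2/x > 18$.

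First I reproduce the contradiction argument of Theorem~\ref{appB2} with $k = 9$. Suppose, toward a contradiction, that no integer $n_0 > 0$ makes $\fint{\Delta f(n)}$ constant on $\{n_0, n_0 + 1, \ldots, n_0 + 9\}$. Then, exactly as in the proof of Theorem~\ref{appB2}, one obtains
\[
x > \frac{9}{2}\Bigl(\frac{N(x)}{9} - 1\Bigr)\Bigl(\frac{N(x)}{9} - 2\Bigr) = \frac{N(x)^2}{18} - \frac{3 N(x)}{2} + 9
\]
for every $x > 0$; equivalently, $N(x)^2 - 27 N(x) + 162 < 18 x$. Completing the square in $N(x)$ gives $N(x) < \tfrac{27}{2} + \sqrt{18 x + \tfrac{729}{4} - 162}$, whence $\limsup_{x \to \infty} N(x)^2/x \le 18$. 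This contradicts the hypothesis \eqref{eq:limsup}.

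Having secured such an $n_0$ and setting $c := \fint{\Delta f(n_0)}$, the rest of the proof follows Theorem~\ref{appB2} verbatim: the $10$-letter word over $\Sigma = \{0, 1\}$ whose $j$-th letter is $\Delta \fint{f(n_0 + j)} - c$ must satisfy the condition $(\mathrm{C}_3)$ by the statement $(\mathrm{S}_3)$, which produces positive integers $d$ and $e \ge n_0$ such that
\[
\fint{f(e + d)} - \fint{f(e)} = \fint{f(e + 2d)} - \fint{f(e + d)} = \fint{f(e + 3d)} - \fint{f(e + 2d)}.
\]
Consequently $\{\fint{f(e + dj)}\}_{j=0}^{3}$ is an AP of length $4$ contained in the graph of $\{\fint{f(n)}\}_{n=1}^{\infty}$.

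I anticipate no serious obstacle, since the entire argument is a constant-tracking refinement of Theorem~\ref{appB2}. The only detail to verify is that the inequality derived from the absence of $n_0$ holds for \emph{every} $x > 0$, so that the contradiction can be extracted using only $\limsup$ in place of $\sup$; this is already the case in the proof of Theorem~\ref{appB2}.
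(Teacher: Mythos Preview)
Your proof is correct and takes essentially the same approach as the paper: both rerun the Theorem~\ref{appB2} argument with $k=9$ to obtain $(N(x)-9)(N(x)-18)<18x$ and conclude $\limsup_{x\to\infty}N(x)^2/x\le 18$, then finish via $(\mathrm{S}_3)$. The only difference is tactical: the paper first disposes of the case where $N(x)^2/x$ is unbounded by citing Theorem~\ref{appB2} and then uses boundedness to force $N(x)/x\to 0$, whereas you avoid the case split by completing the square---a minor streamlining of the same idea.
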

\begin{proof}
	If the function $N(x)^2/x$ is not bounded, our assertion follows from Theorem~\ref{appB2}. 
	Hence we assume that the function $N(x)^2/x$ is bounded. 
	We show that there exists an integer $n_0>0$ such that 
	the function $\fint{\Delta f(n)}$ takes a constant value for every $n_0\le n\le n_0+9$, by contradiction. 
	In the same way as the proof of Theorem~\ref{appB2}, 
	the inequality $(N(x)-9)(N(x)-18) < 18x$ holds. 
	Moreover, since the function $N(x)^2/x$ is bounded, the limit 
	\[
	\frac{N(x)}{x} = \frac{N(x)}{x^{1/2}}\frac{1}{x^{1/2}} \xrightarrow{x\to\infty} 0
	\]
	holds. Thus the inequality $\limsup_{x\to\infty} N(x)^2/x \le 18$ follows, which contradicts the assumption \eqref{eq:limsup}. 
	Therefore, there exists two integers $n_0$ and $c$ such that $\fint{\Delta f(n)}=c$ for every $n_0\le n\le n_0+9$. 
	The remaining can be shown in the same way as the proof of Theorem~\ref{appB2}.
\end{proof}

\begin{remark}
Theorem~\ref{appB3} implies that the graph of 
$\{ \fint{c_2 n^2 + c_1 n + c_0} \}_{n=1}^\infty$ with $0<c_2<1/18$ contains an AP of length $4$. 
However, 
Theorem~\ref{appA2} only implies that the same graph with $0<c_2<1/(6\cdot13^2)$ contains an AP of length $3$.
\end{remark}

The following propositions provide alternative conditions of \eqref{eq:sup} and \eqref{eq:limsup}.

\begin{proposition}
	If a function $f: \mathbb{N}\to\mathbb{R}^+$ satisfies $\Delta f > 0$ and $\Delta^2 f \ge 0$, 
	then for every $c>0$ the following three conditions are equivalent: 
	\[
	\text{{\upshape(i)} $\limsup_{x\to\infty} N(x)^2/x > c$, 
	{\upshape(ii)} $\limsup_{n\to\infty} n^2/f(n) > c$, and 
	{\upshape(iii)} $\liminf_{n\to\infty} f(n)/n^2 < 1/c$.}
	\]
\end{proposition}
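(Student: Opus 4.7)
The plan is to verify the equivalence (i) $\iff$ (ii) by exploiting the duality between $f$ and its counting function $N$, and then observe that (ii) $\iff$ (iii) is immediate from the reciprocal identity for $\limsup$ and $\liminf$.

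First I would note that the hypotheses $\Delta f>0$ and $\Delta^2 f\ge 0$ force $f$ to be strictly increasing and eventually diverging: indeed $\Delta f(n)\ge\Delta f(1)>0$ for all $n$, so $f(n)\ge f(1)+(n-1)\Delta f(1)\to\infty$. In particular, for every $n\in\mathbb{N}$, the level set $\{m\in\mathbb{N}\mid f(m)\le x\}$ equals $\{1,2,\ldots,n\}$ exactly when $x\in[f(n),f(n+1))$, so $N$ is the right-continuous step function with $N(x)=n$ on this interval. This is the only structural fact that uses monotonicity of $f$.

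Next I would compare the two limsups on common ground. On each interval $[f(n),f(n+1))$ the function $x\mapsto N(x)^2/x=n^2/x$ is decreasing, so its supremum on this interval is attained at the left endpoint and equals $n^2/f(n)$. Therefore, for every $n_0\in\mathbb{N}$,
\[
\sup_{x\ge f(n_0)}\frac{N(x)^2}{x}=\sup_{n\ge n_0}\frac{n^2}{f(n)}.
\]
Letting $n_0\to\infty$ (using $f(n_0)\to\infty$) gives
\[
\limsup_{x\to\infty}\frac{N(x)^2}{x}=\limsup_{n\to\infty}\frac{n^2}{f(n)},
\]
which is precisely the equivalence of (i) and (ii).

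Finally, since $f(n)>0$ and $f(n)\to\infty$, the sequence $n^2/f(n)$ is positive, and the standard identity $\limsup_{n\to\infty}a_n=1/\liminf_{n\to\infty}(1/a_n)$ applied to $a_n=n^2/f(n)$ yields
\[
\limsup_{n\to\infty}\frac{n^2}{f(n)}>c\iff\liminf_{n\to\infty}\frac{f(n)}{n^2}<\frac{1}{c},
\]
establishing (ii) $\iff$ (iii). There is no serious obstacle here; the one point that needs a small amount of care is that the monotonicity hypotheses are used only to guarantee that $f$ is strictly increasing with $f(n)\to\infty$ so that $N$ has the clean step-function form described above.
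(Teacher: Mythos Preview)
Your proof is correct. The approach is essentially the same as the paper's---both exploit that $N(x)=n$ on $[f(n),f(n+1))$ after noting $f(n)\to\infty$---but your execution is tighter: you prove the stronger fact that the two limsups are actually \emph{equal} by observing that $\sup_{x\ge f(n_0)}N(x)^2/x=\sup_{n\ge n_0}n^2/f(n)$, whereas the paper splits the equivalence into two implications, handling (i)$\Rightarrow$(ii) by contradiction via the looser bound $N(x)^2/x<(n+1)^2/f(n)$ and then arguing that the cross terms $2n/f(n)+1/f(n)$ vanish in the limit.
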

\begin{proof}
	First, note that the limit $f(n) \ge f(1) + (n-1)\Delta f(1) \to \infty$ holds as $n\to\infty$. 
	The implication (ii)$\Rightarrow$(i) follows from the definition of $N(x)$. 
	Next, we show the implication (i)$\Rightarrow$(ii) by contradiction. 
	Suppose that the condition (i) holds and the condition (ii) does not hold, i.e., 
	$\limsup_{x\to\infty} N(x)^2/x > c$ and $\limsup_{n\to\infty} n^2/f(n) \le c$. 
	The assumption $\Delta f > 0$ and the limit $f(n)\to\infty$ imply that 
	for every $x>0$ we can take only one positive integer $n=n(x)$ satisfying $f(n) \le x < f(n+1)$. 
	Hence every $x>0$ satisfies 
	\begin{equation}
		N(x)^2/x < (n+1)^2/f(n) = n^2/f(n) + 2n/f(n) + 1/f(n),\label{eqappB1}
	\end{equation}
	where $n=n(x)$. 
	Now, the assumption $\limsup_{n\to\infty} n^2/f(n) \le c$ yields $\limsup_{n\to\infty} n/f(n) = 0$ and $\limsup_{n\to\infty} 1/f(n) = 0$. 
	Thus, noting $n=n(x)\to\infty$ as $x\to\infty$ and taking the limit in \eqref{eqappB1} as $x\to\infty$, 
	we have 
	\[
	c < \limsup_{x\to\infty} N(x)^2/x \le \limsup_{n\to\infty} n^2/f(n) \le c,
	\]
	which is a contradiction. 
	Therefore, the implication (i)$\Rightarrow$(ii) holds. 
	The remaining, i.e., the equivalence (ii)$\Leftrightarrow$(iii) is trivial.
\end{proof}

\begin{proposition}
	If a function $f: \mathbb{N}\to\mathbb{R}^+$ satisfies $\Delta f > 0$ and $\sum_{n=1}^\infty 1/f(n)^s = \infty$ for some $s>1/2$, 
	then the condition $\sup_{x>0} N(x)^2/x = \infty$ holds.
\end{proposition}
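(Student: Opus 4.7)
The plan is to prove the contrapositive: assume that $\sup_{x>0} N(x)^2/x$ is finite and deduce that $\sum_{n=1}^\infty 1/f(n)^s$ converges for every $s>1/2$, contradicting the hypothesis. So suppose there exists $C>0$ such that $N(x) \le C\sqrt{x}$ for all $x>0$.

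The key observation is that since $\Delta f>0$, the function $f$ is strictly increasing on $\mathbb{N}$, so by definition of $N$ we have $N(f(n)) = n$ for every $n\in\mathbb{N}$. Specializing the bound $N(x)\le C\sqrt{x}$ at $x=f(n)$ yields $n \le C\sqrt{f(n)}$, equivalently
\[
f(n) \ge n^2/C^2 \quad (n\in\mathbb{N}).
\]

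Therefore, for every $s>1/2$,
\[
\sum_{n=1}^\infty \frac{1}{f(n)^s} \le C^{2s}\sum_{n=1}^\infty \frac{1}{n^{2s}} < \infty,
\]
since $2s>1$. This contradicts the assumption $\sum_{n=1}^\infty 1/f(n)^s = \infty$, so the supremum must be infinite.

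There is essentially no obstacle: the argument only uses the elementary identity $N(f(n))=n$ (which rests on the strict monotonicity coming from $\Delta f>0$) together with the comparison with the $p$-series. Note that the hypothesis $\Delta^2 f\ge 0$ present in the preceding theorems of Appendix~\ref{appB} is not needed here.
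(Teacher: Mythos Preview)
Your proof is correct and follows essentially the same approach as the paper: argue by contradiction, assume $N(x)^2/x$ is bounded, deduce $f(n)\ge \text{const}\cdot n^2$ via $N(f(n))=n$, and then compare with the convergent $p$-series $\sum 1/n^{2s}$. Your write-up is in fact slightly more explicit than the paper's in justifying the step from the bound on $N(x)$ to the lower bound on $f(n)$.
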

\begin{proof}
	Let $\zeta$ be the Riemann zeta function. 
	We show our assertion by contradiction. 
	Suppose that the function $N(x)^2/x$ is bounded. 
	Then the inequality $N(x)^2/x \le M$ holds for some $M>0$. 
	Thus the inequality $n^2/f(n) \le M$ also holds and every $s>1/2$ satisfies 
	$\sum_{n=1}^\infty 1/f(n)^s \le M\zeta(2s) < \infty$, which is a contradiction.
\end{proof}

\section{Partial answer to Question~\ref{ques1}}\label{appC}
This section proves that Question~\ref{ques1} is affirmative 
when the coefficients of the error term $O(f'(n))$ are contained in a finite set $\{c_1,c_2,\ldots,c_m\}$.

\begin{theorem}\label{appC1}
	Let $f: \mathbb{R}^+\to \mathbb{R}$ be a twice differentiable function satisfying \eqref{SCS1}, and 
	$\tilde{R}: \mathbb{N}\to\{c_1,c_2,\ldots,c_m\}$ be a bounded function. 
	If a strictly increasing sequence $\{a(n)\}_{n=1}^{\infty}\subset \mathbb{N}$ can be written as
	\[
	a(n) = f(n) + \tilde{R}(n)f'(n) + O(1),
	\]
	then the sequence $\{a(n)\}_{n=1}^{\infty}$ contains arbitrarily long APs.
\end{theorem}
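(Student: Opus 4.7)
The plan is to adapt the coloring argument from the proof of Proposition~\ref{main1} with a refined $mr$-coloring that simultaneously controls both the value of $\tilde{R}(n)$ and the bounded noise $R(n) := a(n) - f(n) - \tilde{R}(n)f'(n)$. Fix $k \ge 3$, and pick $M > 0$ such that $|R(n)| \le M$ for all sufficiently large $n$. For an integer $r \ge 1$ (to be chosen) I color each large $n$ by the pair $(\tilde{R}(n), q(n)) \in \{c_1,\ldots,c_m\}\times\{0,\ldots,r-1\}$, where $q(n)$ indexes the interval of length $2M/r$ into which $R(n)$ falls. Setting $W := W(mr,k)$ and taking $e$ large enough that $e > W^{3/\alpha}$ and the asymptotic bound $|f''(x)| \le Cx^{-\alpha}$ holds on $[e,\infty)$, van der Waerden's theorem produces a monochromatic AP $P = \{b(j)\}_{j=0}^{k-1}$, $b(j) = dj + e'$ with $d \le W$ and $e' \ge e$, inside the window $\{e,\ldots,e+W-1\}$.

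On $P$, the function $\tilde{R}$ is constant ($\equiv c$ for some $c \in \{c_1,\ldots,c_m\}$), so $a = f + cf' + R$ along $P$, and (N3) gives $N_P(a) \le N_P(f) + N_P(cf') + N_P(R)$. The bound $N_P(f) = O_k(1/W)$ is exactly as in Proposition~\ref{main1}: two mean-value steps give $\Delta^2[f\circ b](j) = d^2 f''(\xi_j)$, controlled by $d \le W$ together with $e \ge W^{3/\alpha}$. The bound $N_P(R) \le 4M(k-2)/r$ follows immediately from (N3), since consecutive values of $R\circ b$ differ by at most $2M/r$. The new step is $N_P(cf')$: although $f'''$ need not exist, writing
\[
\Delta^2[f'\circ b](j) = \bigl[f'(b(j+2)) - f'(b(j+1))\bigr] - \bigl[f'(b(j+1)) - f'(b(j))\bigr] = d\bigl[f''(\eta_{j,2}) - f''(\eta_{j,1})\bigr]
\]
for some $\eta_{j,1},\eta_{j,2}\in[b(j),b(j+2)]$ gives $|\Delta^2[f'\circ b](j)| \le 2Cd/e^\alpha$, whence $N_P(cf') = O_k(|c|W/e^\alpha) = O_k(1/W^2)$.

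Now choose $r$ large enough that $4M(k-2)/r < 1/2$; since increasing $r$ also increases $W = W(mr,k)$, one can simultaneously arrange $N_P(f) + N_P(cf') < 1/2$. Then $N_P(a) < 1$, and since $N_P(a)$ is a non-negative integer it must vanish, meaning $a(P)$ is an AP of length $k$; thus the sequence $\{a(n)\}_{n=1}^\infty$ contains an AP of length $k$. The main obstacle is the extra term $\tilde{R}(n) f'(n)$, since $f'$ is not controlled by the hypothesis \eqref{SCS1}. The resolution is twofold: the $mr$-coloring (relying on $\tilde{R}$ having finite image) forces $\tilde{R}$ to be constant on $P$, and once this is achieved the second differences of $f'$ along $P$ telescope into a single first difference of $f''$, whose smallness is supplied by \eqref{SCS1}.
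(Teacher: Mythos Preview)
Your proof is correct, and it takes a genuinely different route from the paper's argument. The paper first applies a Taylor-type identity
\[
f(n) + \tilde{R}(n)f'(n) + O(1) = f\bigl(n+\tilde{R}(n)\bigr) + O(1),
\]
sets $\tilde{a}(n)=n+\tilde{R}(n)$, and then runs a \emph{two-stage} van der Waerden argument: first an $m$-coloring by the value of $\tilde{R}$ produces an AP $\tilde{P}$ of length $W(r,k)$ on which $\tilde{R}$ is constant (so that $\tilde{a}(\tilde{P})$ is again an AP), and then an $r$-coloring of $\tilde{a}(\tilde{P})$ by the bounded remainder $R(\tilde{a}(n))=a(n)-f(\tilde{a}(n))$ produces the final AP. After this substitution the only analytic estimate needed is the familiar $N_P(f)=o(1)$; the term $cf'$ never appears separately.

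You instead use a \emph{single} $mr$-coloring by the pair $(\tilde{R}(n),q(n))$ and bound $N_P(a)\le N_P(f)+N_P(cf')+N_P(R)$ directly. The new ingredient is your observation that, without any third-derivative hypothesis,
\[
\Delta^2[f'\circ b](j)=d\bigl(f''(\eta_{j,2})-f''(\eta_{j,1})\bigr),
\]
so that $|\Delta^2[f'\circ b](j)|\le 2Cd/e^\alpha = O(1/W^2)$. This is correct and slightly sharper than what is needed. Your approach is more elementary in that it avoids the change of variables $n\mapsto n+\tilde{R}(n)$ and the associated preimage bookkeeping; the paper's approach is more structural in that it reduces the problem to exactly the setting of Proposition~\ref{main1}/Theorem~\ref{appA2}, at the cost of the nested van der Waerden number $W(m,W(r,k))$ rather than your $W(mr,k)$. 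One cosmetic point: the bound $N_P(R)\le 4M(k-2)/r$ comes from the pigeonhole on the $I_q$'s and the triangle inequality for $\Delta^2$, not from (N3) as you write; but the estimate itself is fine.
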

\begin{proof}
	Fix an integer $k\ge3$. First, the assumption \eqref{SCS1} implies 
	\begin{equation}
		a(n) = f(n) + \tilde{R}(n)f'(n) + O(1)
		= f(n+\tilde{R}(n)) + O(1). \label{eqappC1}
	\end{equation}
	We define $\tilde{a}(n)=n+\tilde{R}(n)$, $W=W(m,W(r,k))$, and the following sets: 
	\begin{gather*}
		\tilde{A} = \mathbb{N}\cap[\lfloor W^{3/\alpha} \rfloor, \lfloor W^{3/\alpha} \rfloor+W-1],\\
		\tilde{A}_j = \set{n\in \tilde{A} | \tilde{R}(n)=c_j}\quad(1\le j\le m).
	\end{gather*}
	Since the union of all the small sets $\tilde{A}_j$ is $\tilde{A}$, 
	a small set $\tilde{A}_{\tilde{q}}$ contains an arithmetic progression $\tilde{P}$ of length $W(r,k)$. 
	Thus it follows that 
	\[
	N_{\tilde{P}}(\tilde{a}) \le N_{\tilde{P}}(\id_{\mathbb{N}}) + N_{\tilde{P}}(\tilde{R}) = 0.
	\]
	In other words, the sequence $\tilde{a}(\tilde{P})$ (of real numbers) is also an AP of length $W(r,k)$.
	\par 
	Noting that the sequence $\tilde{a}(\tilde{P})$ is a strictly increasing sequence, 
	we define the function $R: \tilde{a}(\tilde{P})\to\mathbb{R}$ as 
	\begin{equation}
		R(\tilde{a}(n)) = a(n) - f(\tilde{a}(n))\quad(n\in\tilde{P}). \label{eqappC2}
	\end{equation}
	Then the function $R$ is bounded. 
	Thanks to \eqref{eqappC1}, we can take two real numbers $M_1$ and $M_2$ satisfying $M_1\le R(x)\le M_2$ 
	such that $M_1$ and $M_2$ are independent of $r$. We define $M=M_2-M_1$ and the following sets: 
	\begin{gather*}
		I_j = \Bigl[ M_1 + \frac{M}{r}j, M_1 + \frac{M}{r}(j+1) \Bigr]\quad(0\le j\le r-1),\\
		A_j = \set{x\in \tilde{a}(\tilde{P}) | R(n)\in I_j}\quad(0\le j\le r-1).
	\end{gather*}
	Since the union of all the small sets $A_j$ is $\tilde{a}(\tilde{P})$, 
	a small set $A_q$ contains an arithmetic progression $P=\{b(j)\}_{j=0}^{k-1}$ of length $k$. 
	Here $b(j)$ is expressed as $b(j) = dj + e$ with two integers $d,e>0$. 
	Moreover, the inequalities 
	\begin{gather*}
		d \le \tilde{a}(\lfloor W^{3/\alpha} \rfloor+W-1) - \tilde{a}(\lfloor W^{3/\alpha} \rfloor)
		\le W-1+M,\\
		e \ge \tilde{a}(\lfloor W^{3/\alpha} \rfloor) \ge \lfloor W^{3/\alpha} \rfloor + M_1
	\end{gather*}
	hold. These inequalities imply that 
	\begin{gather*}
		N_P(R) \le 2(k-2)\frac{M}{r} = o(1),\\
		N_P(f) = \sum_{j=0}^{k-3} d^2\abs{f''(d(j + \theta_j + \eta_j) + e)}
		= O\Bigl( \frac{(W-1+M)^2}{(\lfloor W^{3/\alpha} \rfloor + M_1)^\alpha} \Bigr)
		= O(1/W) = o(1)
	\end{gather*}
	as $r\to\infty$, where $\theta_j=\theta_j(d,e)$ and $\eta_j=\eta_j(d,e)$ are real numbers satisfying $\theta_j,\eta_j\in(0,1)$. 
	Thus every sufficiently large $r$ satisfies $N_P(f) + N_P(R) < 1$.
	\par
	Recall \eqref{eqappC2}. The domains of $f$ and $R$ are the same, but the domain of $a$ is different from them. 
	Hence we must take a set like the inverse image of $P$ under $\tilde{a}$. 
	Now, the restricted function $\tilde{a}|\tilde{P}$ is injective. 
	When we denote by $P'$ the inverse image of $P\subset\tilde{a}(\tilde{P})$ under this restricted function, 
	the sequence $P'$ is also an AP of length $k$. Then the inequality $N_{P'}(a)\le N_P(f) + N_P(R) < 1$ holds. 
	Since $N_{P'}(a)$ is a non-negative integer, the equation $N_{P'}(a)=0$ follows: $a(P')$ is an AP of length $k$.
\end{proof}

\end{document}